\newtheorem{theorem}{Theorem}
\newtheorem{corollary}[theorem]{Corollary}
\newtheorem{proposition}[theorem]{Proposition}
	\title{A Graph Decomposition motivated by \\the Geometry of Randomized Rounding}
\author{Stefan Steinerberger}
\address{Department of Mathematics, University of Washington, Seattle, WA 98195}
\thanks{The author is partially supported by NSF (DMS-2123224) and the Alfred P. Sloan Foundation.}
\keywords{Graph Decomposition, Randomized Rounding, MaxCut}
\date\empty
\begin{document}

\begin{abstract}
We introduce a graph decomposition which exists for all simple, connected graphs $G=(V,E)$. The decomposition $V = A \cup B \cup C$ is
such that each vertex in $A$ has more neighbors in $B$ than in $A$ and vice versa. $C$ is `balanced': each $v \in C$ has the
same number of neighbours in $A$ and $B$. These decompositions arise naturally from the behavior of an associated dynamical system (`Randomized Rounding') on
$(\mathbb{S}^1)^{|V|}$. Connections to judicious partitions and the \textsc{MaxCut} problem (in particular the Burer-Monteiro-Zhang heuristic) are being discussed.
\end{abstract}

\maketitle

\vspace{-0pt}

\section{Introduction and Statement}
\subsection{A Decomposition.} Let $G=(V, E)$ be a simple, connected graph. There exists a partition of the vertices into three sets
$ V = A \cup B \cup C$ (where $C$ is possibly empty)
with the following properties. Using $d_A(v), d_B(v), d_C(v)$ to denote the number of neighbors a vertex $v \in V$ has in $A,B,C$, respectively, 
\begin{enumerate}
\item each $v \in A$ has more neighbors in $B$ than it has neighbors in $A$ 
$$ d_B(v) \geq d_A(v) + \max\left\{1, d_C(v) \right\}$$
\item each $v \in B$ has more neighbors in $A$ than it has neighbors in $B$ 
$$ d_A(v) \geq d_B(v) + \max\left\{1, d_C(v) \right\}$$
\item  two vertices in $C$ are not connected by an edge: if $v \in C$, then $d_C(v) = 0$
\item each $v \in C$ has the same number of neighbors in $A$ and $B$: $d_A(v) = d_B(v)$
\item and, in particular,
$$\# E(A \cup B, C) + 2\# E(A,A) + 2 \# E(B,B) \leq 2\# E(A,B).$$
\end{enumerate}

\begin{center}
\begin{figure}[h!]
\begin{tikzpicture}[scale=1.7]
\filldraw (0,0) circle (0.04cm);
\filldraw (1,0) circle (0.04cm);
\filldraw (1,0.5) circle (0.04cm);
\filldraw (0,0.5) circle (0.04cm);
\filldraw (0.5,1) circle (0.04cm);
\draw [thick] (0,0) -- (1,0) -- (1,0.5) -- (0,0.5) -- (0.5,1) -- (1, 0.5) -- (0,0) -- (0,0.5) -- (1,0);
\draw [thick] (0,0) -- (0.5, 1) -- (1,0);
\draw [thick] (4,1) ellipse (0.7cm and 0.2cm);
\draw [thick] (4,0) ellipse (0.7cm and 0.2cm);
\draw [thick] (3, 0.5) ellipse (0.2cm and 0.4cm);
\filldraw (3.7,0) circle (0.04cm);
\filldraw (4.3,0) circle (0.04cm);
\filldraw (3.7,1) circle (0.04cm);
\filldraw (4.3,1) circle (0.04cm);
\filldraw (3,0.5) circle (0.04cm);
\draw[thick] (3.7,0) -- (4.3,0) -- (4.3,1) -- (3.7, 1) -- (3, 0.5) -- (3.7, 0) -- (3.7, 1) -- (4.3,0) -- (3, 0.5) -- (4.3, 1);
\draw[thick] (3.7,0) -- (4.3, 1);
\node at (5, 1) {$A$};
\node at (5, 0) {$B$};
\node at (2.5, 0.5) {$C$};
\end{tikzpicture}
\caption{The decomposition illustrated on $K_5$.}
\label{pic:k5}
\end{figure}
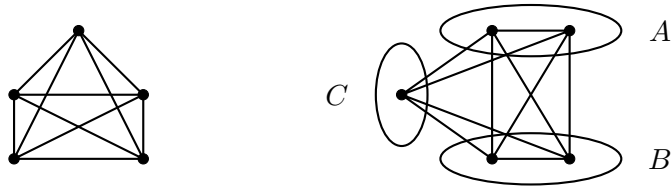
\end{center}

Consider the example of the complete graph $K_n$. Properties $(1)$ and $(2)$ require that $A$ and $B$ have the same number of vertices. Property (3) requires, for the complete graph, that $C$ has at most one element. Therefore, if $n$ is even, then $A$ and $B$ contain half the vertices and $C = \emptyset$. Conversely, if $n$ is odd, then the decomposition is given by $C$ containing a single vertex and $A, B$ containing the same amount of vertices.

\begin{center}
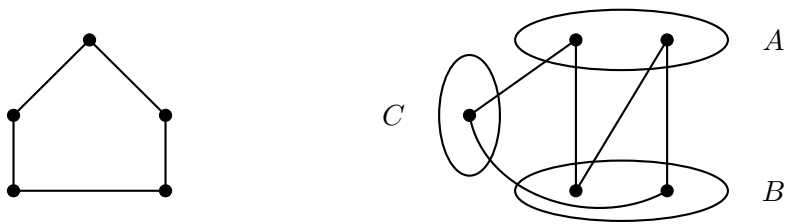
\begin{figure}[h!]
\begin{tikzpicture}[scale=2]
\filldraw (0,0) circle (0.04cm);
\filldraw (1,0) circle (0.04cm);
\filldraw (1,0.5) circle (0.04cm);
\filldraw (0,0.5) circle (0.04cm);
\filldraw (0.5,1) circle (0.04cm);
\draw [thick] (0,0) -- (1,0) -- (1,0.5) -- (0.5,1) -- (0,0.5) -- (0,0);
\draw [thick] (4,1) ellipse (0.7cm and 0.2cm);
\draw [thick] (4,0) ellipse (0.7cm and 0.2cm);
\draw [thick] (3, 0.5) ellipse (0.2cm and 0.4cm);
\filldraw (3.7,0) circle (0.04cm);
\filldraw (4.3,0) circle (0.04cm);
\filldraw (3.7,1) circle (0.04cm);
\filldraw (4.3,1) circle (0.04cm);
\filldraw (3,0.5) circle (0.04cm);
\draw[thick] (3, 0.5) -- (3.7,1) -- (3.7, 0) -- (4.3, 1) -- (4.3,0); 
\draw [thick] (4.3,0) to[out=210, in=280] (3,0.5);
\node at (5, 1) {$A$};
\node at (5, 0) {$B$};
\node at (2.5, 0.5) {$C$};
\end{tikzpicture}
\caption{The decomposition illustrated on $C_5$.}
\label{pic:halves}
\end{figure}
\end{center}

\vspace{-20pt}

It is clear that one can deduce additional properties of the decomposition if one knows more about the graph: for example, all the vertices in $C$ necessarily have even degree (because of Property (4)) and therefore $C = \emptyset$ for, say, cubic graphs (or, more generally, graphs where each vertex has an odd degree). We illustrate this on the Frucht graph and on the Petersen graph (see Fig. \ref{fig:frucht}) both of which are cubic. For these graphs, the decomposition partitions the vertices $V = A \cup B$ such that each vertex in $A$ has more neighbors in $B$ than in $A$ and vice versa. This is illustrated in Fig. \ref{fig:frucht}, the coloring of the vertices identifies the partition. Each blue vertex has more red neighbors than blue neighbors and vice versa.

\begin{center}
\begin{figure}[h!]
\begin{tikzpicture}[scale=2]
\node at (1,0.5) {\includegraphics[width=0.3\textwidth]{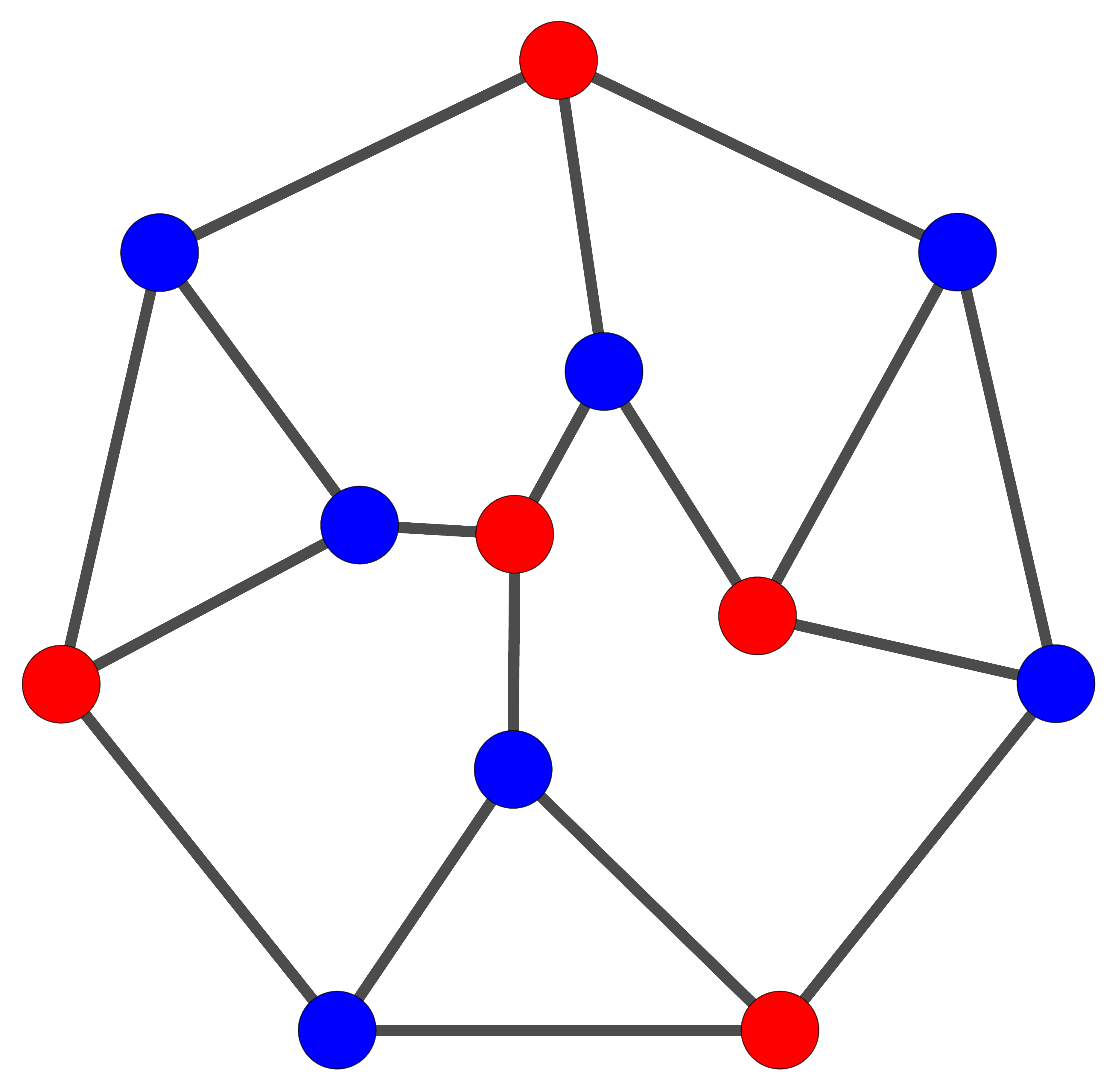}};
\node at (4,0.5) {\includegraphics[width=0.3\textwidth]{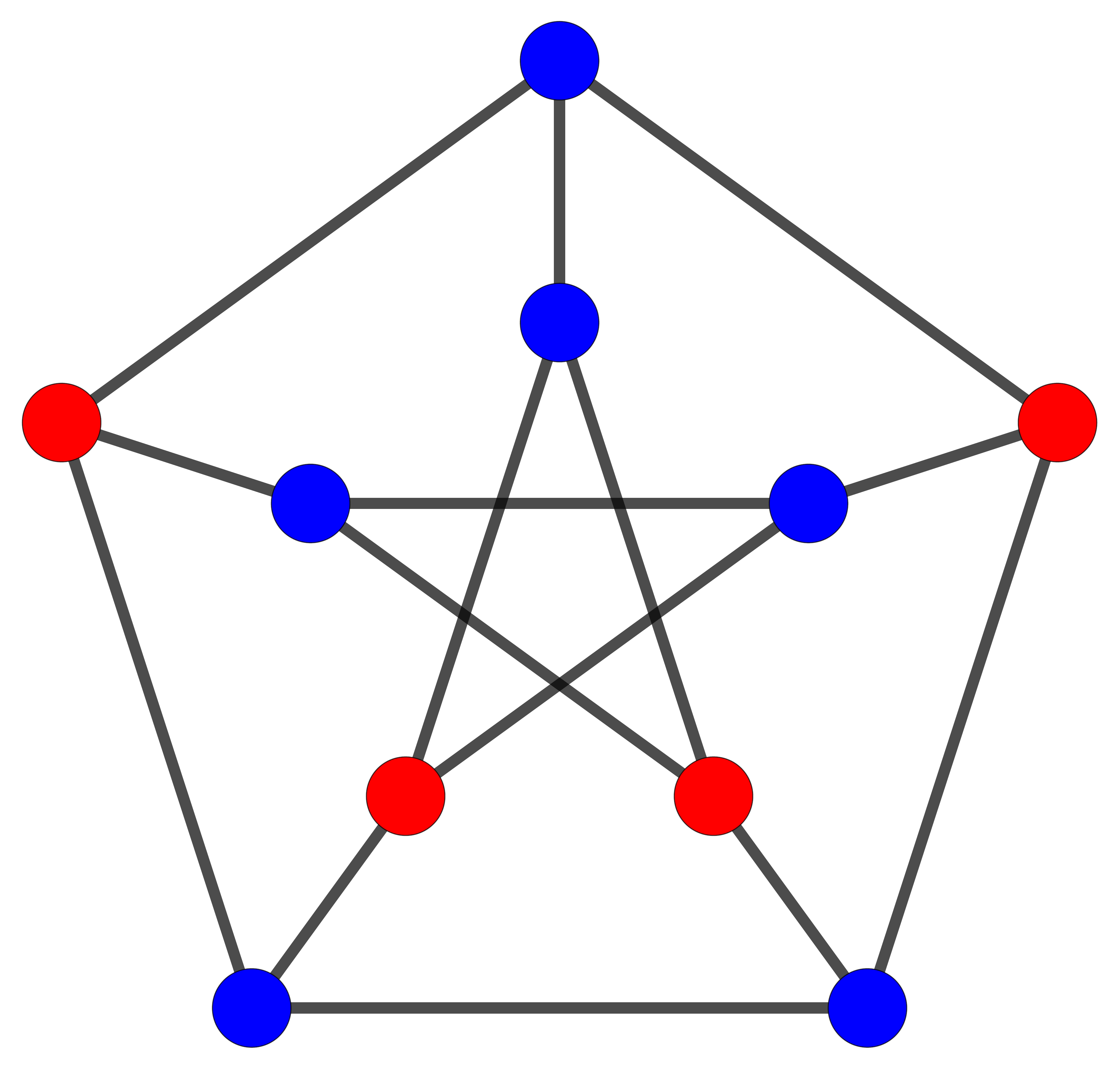}};
\end{tikzpicture}
\caption{Frucht graph and Petersen graph decomposed. }
\label{fig:frucht}
\end{figure}
\end{center}

Property (5) is, in particular, implied by Property (1) and Property (2). Let $\mathcal{A} \subseteq A$ be an arbitrary subset of $A$. Property (1) immediately shows
\begin{align*}
\# E(\mathcal{A}, C) &=  \sum_{v \in \mathcal{A}} d_C(v) \leq  \sum_{v \in \mathcal{A}} \max\left\{1, d_C(v) \right\} \leq \sum_{v \in \mathcal{A}} (d_B(v) - d_A(v)) \\
&= \# E(\mathcal{A}, B) - \sum_{v \in \mathcal{A}} d_A(v) =\# E(\mathcal{A}, B) - \left( \# E(\mathcal{A}, A \setminus \mathcal{A}) + 2\# E (\mathcal{A}, \mathcal{A}) \right) \\
&= \# E(\mathcal{A}, B) - \left( \# E(\mathcal{A}, A) + \# E (\mathcal{A}, \mathcal{A}) \right) 
\end{align*}
and therefore we the inequality
$$ \# E(\mathcal{A}, C) + \# E(\mathcal{A}, A) + \# E (\mathcal{A}, \mathcal{A}) \leq \# E(\mathcal{A}, B).$$
Property (2) then implies the same result for $\mathcal{B} \subseteq B$
$$ \# E(\mathcal{B}, C) + \# E(\mathcal{B}, B) + \# E (\mathcal{B}, \mathcal{B}) \leq \# E(\mathcal{B}, A)$$
 and by setting $\mathcal{A} = A$, $\mathcal{B} = B$ and then summing both identities, we obtain
 $$\# E(A \cup B, C) + 2\# E(A,A) + 2 \# E(B,B) \leq 2\# E(A,B).$$

\begin{center}
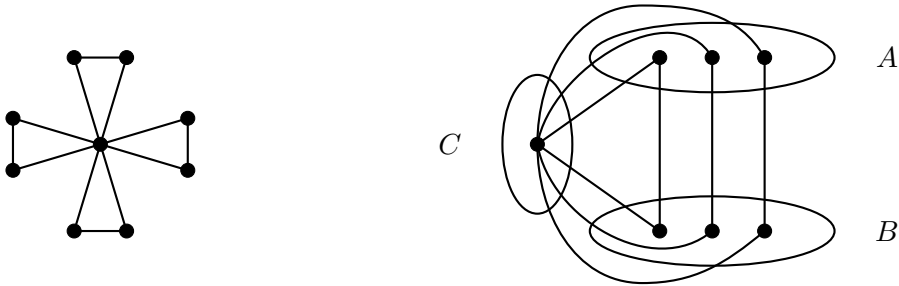
\begin{figure}[h!]
\begin{tikzpicture}[scale=2.3]
\filldraw (0.5,0.5) circle (0.04cm);
\filldraw (1,0.5+0.15) circle (0.04cm);
\filldraw (1,0.5-0.15) circle (0.04cm);
\draw [thick] (0.5,0.5) -- (1,0.5-0.15) -- (1,0.5+0.15) -- (0.5,0.5);
\filldraw (0,0.5-0.15) circle (0.04cm);
\filldraw (0,0.5+0.15) circle (0.04cm);
\draw [thick] (0.5,0.5) -- (0,0.5-0.15) -- (0,0.5+0.15) -- (0.5,0.5);
\filldraw (0.5+0.15,1) circle (0.04cm);
\filldraw (0.5-0.15,1) circle (0.04cm);
\draw [thick] (0.5,0.5) -- (0.5+0.15,1) -- (0.5-0.15,1) -- (0.5,0.5);
\filldraw (0.5+0.15,0) circle (0.04cm);
\filldraw (0.5-0.15,0) circle (0.04cm);
\draw [thick] (0.5,0.5) -- (0.5+0.15,0) -- (0.5-0.15,0) -- (0.5,0.5);
\draw [thick] (4,1) ellipse (0.7cm and 0.2cm);
\draw [thick] (4,0) ellipse (0.7cm and 0.2cm);
\draw [thick] (3, 0.5) ellipse (0.2cm and 0.4cm);
\filldraw (3.7,0) circle (0.04cm);
\filldraw (4,0) circle (0.04cm);
\filldraw (4.3,0) circle (0.04cm);
\filldraw (3.7,1) circle (0.04cm);
\filldraw (4,1) circle (0.04cm);
\filldraw (4.3,1) circle (0.04cm);
\filldraw (3,0.5) circle (0.04cm);
\draw [thick] (3.7, 0) -- (3.7, 1);
\draw [thick] (4, 0) -- (4, 1);
\draw [thick] (4.3, 0) -- (4.3, 1);
\draw [thick] (4.3,0) to[out=220, in=0] (3.6, -0.3) to[out=180, in=270] (3,0.5);
\draw [thick] (4,0) to[out=220, in=280] (3,0.5);
\draw [thick] (4,1) to[out=120, in=80] (3,0.5);
\draw [thick] (4.3,1) to[out=120, in=0] (3.6, 1.3) to[out=180, in=90] (3,0.5);
\draw [thick] (3.7,0) -- (3,0.5);
\draw [thick] (3.7,1) -- (3,0.5);
\node at (5, 1) {$A$};
\node at (5, 0) {$B$};
\node at (2.5, 0.5) {$C$};
\end{tikzpicture}
\caption{The decomposition illustrated on a friendship graph.}
\label{pic:friendship}
\end{figure}
\end{center}

We illustrate these inequalities on the friendship graph: the friendship graph is a union of triangles that all have one common vertex. Let us consider the friendship graph $G$ given by
$2n+1$ vertices and comprised of $n$ different triangles. We can decompose the graph in this case by setting $C$ to be the set containing only the central vertex shared by all
triangles and $A$ and $B$ as one would expect (see Fig. \ref{pic:friendship} for the case $n=3$). For two such subsets $\mathcal{A}, \mathcal{B}$, the expression simplifies to
\begin{align*}
\# E(\mathcal{A}, C) + \# E(\mathcal{A}, A) + \# E (\mathcal{A}, \mathcal{A}) = |\mathcal{A}| = \# E (\mathcal{A}, B) 
 \end{align*}
 The same property is satisfied for all $\mathcal{B} \subseteq B$ and
 \begin{align*}
\# E(\mathcal{B}, C) + \# E(\mathcal{B}, B) + \# E (\mathcal{B}, \mathcal{B}) = |\mathcal{B}| = \# E (\mathcal{B}, A) 
 \end{align*}
 We observe that $\# E(A \cup B, C) = 2 \# E(A,B)$
 which shows that the friendship graph is extremal with respect to the ratio of the number of edges between $A \cup B$ and $C$ when compared
 to the number of edges between $A$ and $B$.

\subsection{The main result.} We can now state the main result.

\begin{theorem}
Every simple, connected graph $G$ has a decomposition $V=A \cup B \cup C$ satisfying properties (1)-(5). Moreover,
there exists an explicit notion of energy $f_G:(\mathbb{S}^1)^{|V|} \rightarrow \mathbb{R}$ (`Randomized Rounding') such that the gradient
flow naturally leads to such a decomposition.
\end{theorem}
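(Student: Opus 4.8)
The plan is to introduce the energy $f_G$ first and then read the combinatorics off its critical points. Identify $\mathbb{S}^1$ with $\mathbb{R}/2\pi\mathbb{Z}$, write $d(\cdot,\cdot)\in[0,\pi]$ for the geodesic distance on it, and set
\[ f_G(\theta)=\frac1\pi\sum_{\{u,v\}\in E} d(\theta_u,\theta_v),\qquad \theta=(\theta_v)_{v\in V}\in(\mathbb{S}^1)^{|V|}. \]
This is the randomized rounding energy: if $\tau\in\mathbb{S}^1$ is uniform and $v$ is assigned to one side of a cut according to whether $\theta_v\in(\tau,\tau+\pi)$, then $f_G(\theta)$ is the expected number of cut edges. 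Two observations: for every $\theta$ the value $f_G(\theta)$ is an average of sizes of honest cuts, so $f_G(\theta)\le\mathrm{MaxCut}(G)$; and if $\theta$ is $\{0,\pi\}$-valued, encoding a cut $(S,\bar S)$, then $f_G(\theta)=\#E(S,\bar S)$. Hence $\max f_G=\mathrm{MaxCut}(G)$, attained at the configuration of any maximum cut. The role of the dynamical system is that $f_G$ is Lipschitz and piecewise linear, so its (sub)gradient flow — or, equivalently, a coordinate ascent that repeatedly moves a single $\theta_v$ to an $f_G$-optimal angle — increases $f_G$ and settles at a \emph{coordinate-critical} configuration: one where, for every $v$, the map $t\mapsto f_G(\theta)|_{\theta_v=t}$ is maximal at $\theta_v$. (A global maximizer of $f_G$ is automatically coordinate-critical.)

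The combinatorial content is extracted from coordinate-criticality at a configuration valued in the three angles $0,\pi,\tfrac\pi2$. Put $A=\{v:\theta_v=0\}$, $B=\{v:\theta_v=\pi\}$, $C=\{v:\theta_v=\tfrac\pi2\}$. For $v\in A$ the one-variable restriction is $g_v(t)=d_A(v)\,d(t,0)+d_B(v)\,d(t,\pi)+d_C(v)\,d(t,\tfrac\pi2)$, which near $t=0$ equals $\text{const}+t\,(d_A(v)-d_B(v)-d_C(v))$ for $t>0$ and $\text{const}+t\,(d_B(v)-d_A(v)-d_C(v))$ for $t<0$; maximality at $t=0$ forces $d_B(v)\ge d_A(v)+d_C(v)$ (and, harmlessly, $d_A(v)\le d_B(v)+d_C(v)$), and symmetrically for $v\in B$. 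For $v\in C$ the same computation at $t=\tfrac\pi2$ gives both $d_B(v)\ge d_A(v)+d_C(v)$ and $d_A(v)\ge d_B(v)+d_C(v)$, which together force $d_C(v)=0$ (Property (3)) and $d_A(v)=d_B(v)$ (Property (4)). So any coordinate-critical three-angle configuration already satisfies (3), (4) and the versions of (1), (2) with $d_C(v)$ in place of $\max\{1,d_C(v)\}$.

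To obtain the sharper inequalities I would construct such a configuration by a monotone procedure. Begin with the $\{0,\pi\}$-valued configuration of a maximum cut, a global maximizer of $f_G$. Then repeatedly apply the move: if some pole vertex $v$ has $d_C(v)=0$ and $d_A(v)=d_B(v)$ (for $v\in A$, or the mirror condition for $v\in B$), reset $\theta_v:=\tfrac\pi2$. Since $d(\cdot,0)+d(\cdot,\pi)\equiv\pi$, one checks the move changes $f_G$ by $\tfrac12(d_A(v)-d_B(v))=0$, so we stay at a global maximizer — hence coordinate-critical — at each step; and it is only ever applied to a vertex with no neighbour in $C$, so $C$ stays independent throughout and the number of non-$C$ vertices strictly drops, forcing termination. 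At the terminal configuration $(A,B,C)$: Property (3) holds by construction, Property (4) holds for $C$ by the previous paragraph, and coordinate-criticality gives $d_B(v)\ge d_A(v)+d_C(v)$ for each $v\in A$; moreover the move being inapplicable means each $v\in A$ has either $d_C(v)\ge1$, whence $d_B(v)\ge d_A(v)+d_C(v)=d_A(v)+\max\{1,d_C(v)\}$, or $d_C(v)=0$ together with $d_A(v)\ne d_B(v)$, whence $d_B(v)\ge d_A(v)+1$ — in both cases Property (1), and symmetrically (2). Property (5) then follows from (1), (2) by the computation already in the Introduction. This also delivers the ``moreover'': the terminal configuration is a rest point of the gradient flow of the explicit energy $f_G$, reached from a maximum cut by moving along the directions along which $f_G$ is locally constant.

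The step I expect to be the main obstacle is not the combinatorics above but the rigorous form of the convergence statement for the dynamics when $f_G$ is only piecewise linear — that the (sub)gradient flow actually halts at a coordinate-critical configuration rather than drifting along level sets indefinitely. For the bare existence claim this is avoidable, since the construction only uses that a maximum cut maximizes $f_G$ and that the finitely many equator moves preserve $f_G$; but for a clean statement of the ``moreover'' one wants either a {\L}ojasiewicz-type inequality for the definable function $f_G$, or a small perturbation of $f_G$ for which the decompositions of Properties (1)--(5) become isolated local maxima, and identifying the right such statement is the delicate point.
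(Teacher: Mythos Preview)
Your argument is correct, and your $f_G$ is (up to the affine change $f=2|E|-4f_G$) exactly the paper's energy, so the ``moreover'' clause refers to the same functional. The construction, however, takes a genuinely different route. The paper starts from an \emph{arbitrary} configuration $\theta\in(\mathbb{S}^1)^n$ and alternates two procedures: a \emph{rotation argument} that continuously deforms any configuration into an antipodal one without raising the energy, and an extraction step that either reads off a valid decomposition or finds a move lowering the energy by at least $1$; boundedness of the energy then forces termination in polynomially many rounds. You instead begin at a global maximizer of $f_G$---a \textsc{MaxCut} solution---and run only a restricted family of energy-preserving moves (send $v$ to $\tfrac{\pi}{2}$ only when $d_C(v)=0$ and $d_A(v)=d_B(v)$). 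The paper in fact mentions this \textsc{MaxCut}-initialized shortcut in its closing remark as a simpler existence proof. What each approach buys: your extra condition $d_C(v)=0$ keeps $C$ independent throughout, so you never need the paper's separate reflection step for eliminating edges inside $C$, and the whole argument fits in a few lines; the paper's route, by contrast, is what actually substantiates the dynamical ``moreover''---it shows that from \emph{any} initial point there is an energy-monotone path to a decomposition-yielding configuration (its Corollary~6), and it runs in polynomial time rather than presupposing an NP-hard oracle. Your closing caveat about subgradient-flow convergence is thus exactly on point: the paper's rotation argument is the missing ingredient that lets one dispense with the \textsc{MaxCut} initialization.
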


The existence of such a decomposition follows relatively quickly from an arbitrary \textsc{Max-Cut} solution. We show that they arise naturally
from a dynamical system that is related to heuristic approaches to \textsc{MaxCut}. The proof is constructive and
provides an explicit algorithm how to find such a set. The algorithm runs in polynomial time and has a `dynamical' flavor since gradient descent
play a role: a typical run might be much faster than the worst-case behavior. This mirrors a certain unexplained efficiency in certain
type of heuristics used for the \textsc{MaxCut} problem which is discussed in \S 2. 

\begin{corollary}
Let $G=(V,E)$ be a simple, connected graph where each vertex has odd degree. There exists a partition $V =A \cup B$ such that each vertex in $A$
has more neighbors in $B$ than in $A$ and each vertex in $B$ has more neighbors in $A$ than in $B$.
\end{corollary}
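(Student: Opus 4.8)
The plan is to obtain the Corollary as an immediate consequence of the Theorem, the point being that the odd-degree hypothesis forces the set $C$ to be empty. First I would apply the Theorem to the graph $G$, producing a decomposition $V = A \cup B \cup C$ satisfying properties (1)--(5). The crucial step is a parity argument using properties (3) and (4): for any $v \in C$ we have $d_C(v) = 0$ by (3), so $\deg(v) = d_A(v) + d_B(v) + d_C(v) = d_A(v) + d_B(v)$, and property (4) gives $d_A(v) = d_B(v)$, whence $\deg(v) = 2\, d_A(v)$ is even.

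Since by hypothesis every vertex of $G$ has odd degree, the previous paragraph shows that no vertex can belong to $C$; that is, $C = \emptyset$ and therefore $V = A \cup B$. Now in properties (1) and (2) the term $\max\{1, d_C(v)\}$ equals $\max\{1,0\} = 1$ for every vertex, so these properties read $d_B(v) \geq d_A(v) + 1$ for all $v \in A$ and $d_A(v) \geq d_B(v) + 1$ for all $v \in B$. In other words, each vertex of $A$ has strictly more neighbors in $B$ than in $A$, and each vertex of $B$ has strictly more neighbors in $A$ than in $B$, which is exactly the asserted partition. (One also notes in passing that neither part can be empty: if, say, $B = \emptyset$, then property (1) would force $0 = d_B(v) \geq d_A(v) + 1 \geq 1$ for $v \in A$, a contradiction.)

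I do not anticipate any real obstacle here, since all the substance has been absorbed into the Theorem; the Corollary is essentially a one-line observation about the parity constraint encoded in property (4), of which the cubic case (Frucht and Petersen graphs) discussed above is a special instance. The only point worth a moment's care is that the Theorem genuinely permits $C = \emptyset$, so that the conclusion "$V = A \cup B$" is meaningful and not vacuous; this is explicitly allowed in the statement of the decomposition.
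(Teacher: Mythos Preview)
Your proof is correct and matches the paper's first approach exactly: the paper also derives the corollary from the main Theorem via the observation that property (4) forces every vertex in $C$ to have even degree, so the odd-degree hypothesis makes $C = \emptyset$.

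It is worth noting, though, that the paper immediately follows this with a second, self-contained ``folklore'' argument that avoids the Theorem entirely: start from any partition $V = A \cup B$, observe that (by odd degree) every vertex has strictly more neighbors on one side than the other, and repeatedly move any vertex with more neighbors in its own part to the other part; each such move increases $\#E(A,B)$ by at least $1$, so the process terminates after at most $|E|$ steps in a partition of the desired type. This alternative is more elementary---it requires nothing about the dynamical-systems construction---and also makes the polynomial-time computability of such a partition transparent without invoking the machinery behind the Theorem. Your route, on the other hand, illustrates that the decomposition of the Theorem genuinely subsumes this classical fact.
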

\begin{proof}
If each degree is odd, then $C = \emptyset$ and this follows immediately from the main result. However, the statement is much easier and there is a simple folklore proof.  Start with an arbitrary partition $V = A \cup B$. Each single vertex in
either set either has more neighbors in the other set (which is `good') or more neighbors in its own set (which is `bad') -- since each vertex has odd degree, a tie is not possible and each vertex is either `good' or `bad'. 
Every time we take a bad vertex and move it to the other set, the quantity $\# E(A,B)$, the number of edges between the sets $A$ and $B$ goes up by at least 1. This
means that after at most $|E|$ steps there cannot be any bad vertices left on either side of the partition.
\end{proof}

This is connected to the notion of an \textit{external partition}: an external partition of a graph is a splitting $V = A \cup B$ such that each each vertex $a \in A$ has at least as many neighbors in $B$ as it does in $A$ and vice versa. Any solution of the \textsc{MaxCut} problem is an external partition, so they always exist. Our decomposition guarantees something stronger in the sense that there are strictly more neighbors in the other set (and the price we have to pay is the existence of the third set $C$). This is part of a larger circle of questions introduced by Gerber \& Kobler \cite{gerber, gerber2}, we also refer to Ban \& Linial \cite{ban} and Bazgan, Tuza \& Vanderpooten \cite{bazgan1, bazgan2, bazgan3}. These questions are mainly concerned with partitions into two parts whereas our
decomposition will generally have three parts (with $C$ playing a slightly `asymmetric' role). 
\begin{center}
\begin{figure}[h!]
\begin{tikzpicture}[scale=2]
\node at (1,0.5) {\includegraphics[width=0.4\textwidth]{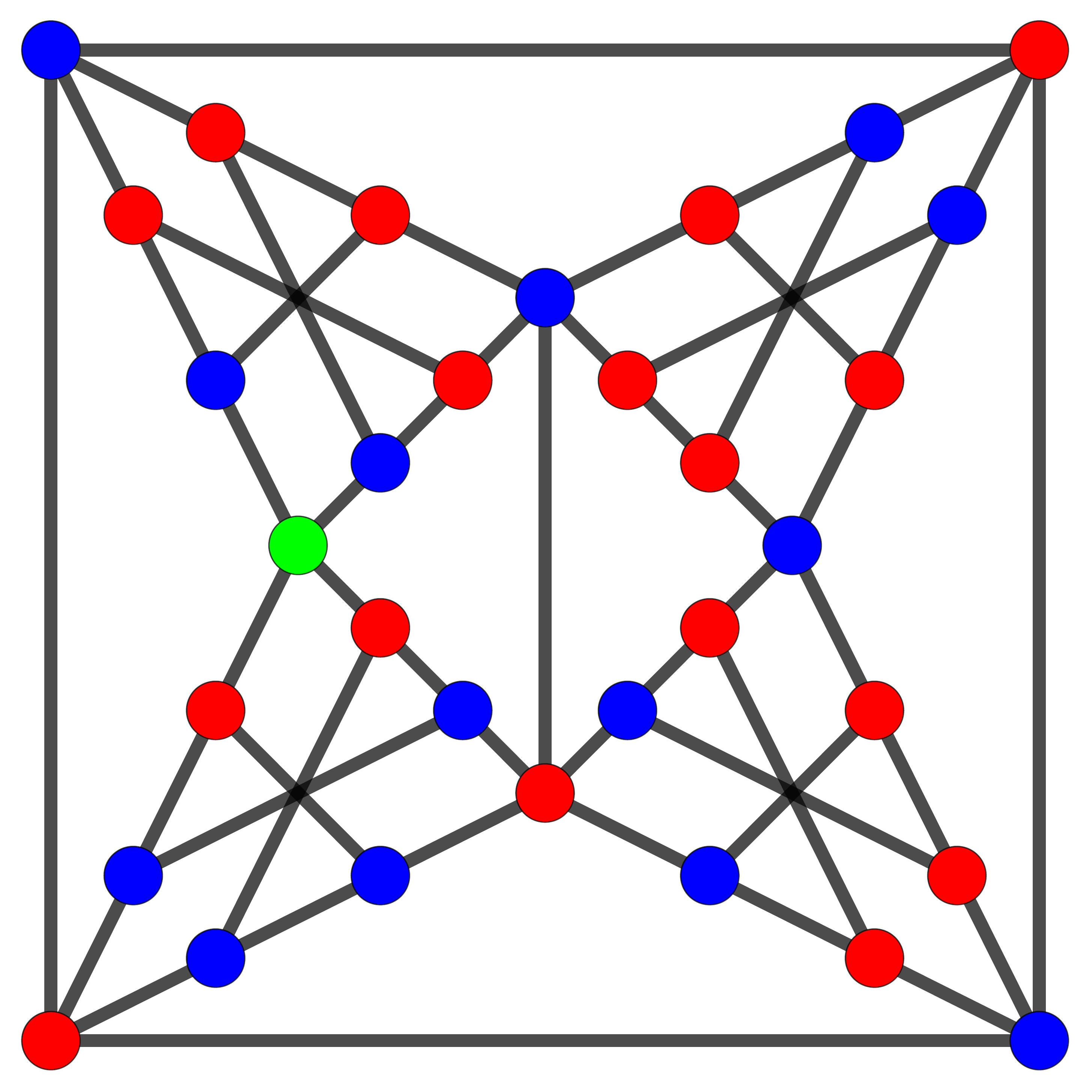}};
\end{tikzpicture}
\caption{The Thomassen graph on 32 vertices decomposed: green denotes the set $C$, the sets $A$ and $B$ are red and blue.}
\label{fig:frucht}
\end{figure}
\end{center}
\vspace{-20pt}

\subsection{Judicious Partitions.} 
We note that the decomposition immediately implies that we can partition the graph into two sets, for example $A \cup C$ and $B$, such that many edges run between them. Due to properties (3) and (4) of the decomposition, for the purpose of the subsequent argument it does not really matter how we distribute $C$, we could move some of its vertices into $A$ and the others into $B$.
\begin{corollary}
Let $G$ be a simple, connected graph with maximum degree $\Delta$ and let $V = A \cup B \cup C$ be the decomposition as above. Then the number of edges that run between $A \cup C$ and $B$ satisfies
$$ \# E(A \cup C, B) \geq \left(\frac{1}{2} + \frac{1}{3\Delta} \right) |E|.$$
\end{corollary}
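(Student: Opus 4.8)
The plan is to start from the inequality in Property (5), namely
$$\# E(A \cup B, C) + 2\# E(A,A) + 2 \# E(B,B) \leq 2\# E(A,B),$$
and combine it with the trivial edge count $|E| = \# E(A,A) + \# E(B,B) + \# E(A,B) + \# E(A \cup B, C)$ (using Property (3), $C$ is an independent set, so there are no edges inside $C$). The first observation is that Property (5) already gives $\# E(A,B) \ge \tfrac12 |E|$-type bounds; the real content of the corollary is the extra additive gain of $\tfrac{1}{3\Delta}|E|$, and that must come from looking more carefully at the vertices of $C$ together with the slack in the $\max\{1, d_C(v)\}$ terms of Properties (1) and (2).

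Concretely, I would split into two cases according to whether $C$ is `large' or `small' relative to $|E|$. If $\# E(A \cup B, C)$ is a reasonably large fraction of $|E|$, then since every edge incident to $C$ goes to $A$ or $B$ and each $v \in C$ has $d_A(v) = d_B(v)$ (Property (4)), we can move each vertex of $C$ to whichever of $A, B$ makes $\# E(A \cup C', B)$-type quantities larger — but more to the point, exactly half of the $C$-edges already cross the cut $(A \cup C, B)$ in expectation, and we should instead argue that putting all of $C$ on the $A$-side makes $\# E(A \cup C, B) = \# E(A,B) + \# E(C, B) = \# E(A,B) + \tfrac12 \# E(A \cup B, C)$. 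So
$$\# E(A \cup C, B) = \# E(A,B) + \tfrac12 \# E(A \cup B, C).$$
Now substitute the edge-count identity $\# E(A,B) = |E| - \# E(A,A) - \# E(B,B) - \# E(A \cup B, C)$ and use Property (5) in the form $\# E(A,A) + \# E(B,B) \le \# E(A,B) - \tfrac12 \# E(A\cup B, C)$ to get a bound of the shape $\# E(A\cup C, B) \ge \tfrac12|E| + (\text{positive combination of } \# E(A,A), \# E(B,B), \# E(A\cup B,C))$. This already settles the case where any of those three quantities is $\gtrsim \tfrac{1}{3\Delta}|E|$.

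The remaining, and I expect hardest, case is when $\# E(A,A)$, $\# E(B,B)$, and $\# E(A \cup B, C)$ are all simultaneously tiny, say each less than $\tfrac{1}{3\Delta}|E|$. Then almost all edges run between $A$ and $B$ and the cut $(A \cup C, B)$ is already within $o(|E|)$ of $|E|$ itself, so the bound $(\tfrac12 + \tfrac{1}{3\Delta})|E|$ should hold comfortably — but one has to be careful: if $\# E(A,A) = \# E(B,B) = 0$ and $C = \emptyset$, then $\# E(A, B) = |E|$, and the bound is immediate since $\tfrac12 + \tfrac{1}{3\Delta} \le 1$. The subtlety is to check the thresholds line up so that the two cases genuinely cover everything; I would choose the splitting threshold to be exactly $\tfrac{1}{3\Delta}|E|$ and verify that in the `small $C$, few internal edges' regime we get $\# E(A \cup C, B) \ge |E| - \tfrac{3}{3\Delta}|E| = |E| - \tfrac{1}{\Delta}|E| \ge (\tfrac12 + \tfrac{1}{3\Delta})|E|$ whenever $\Delta \ge 2$ (the case $\Delta = 1$ being a trivial matching). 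The factor $\tfrac{1}{3\Delta}$ — rather than something sharper — presumably reflects exactly this three-way split of the `defect' $|E| - \# E(A,B)$, and getting the constants to balance across the cases is the main bookkeeping obstacle; no deep idea beyond Property (5) and the identity for $\# E(A \cup C, B)$ should be needed.
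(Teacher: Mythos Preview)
Your plan has a real gap: Property~(5) alone cannot produce the $\tfrac{1}{3\Delta}$ gain, because it never sees the parameter~$\Delta$. Concretely, writing $X=\#E(A,A)+\#E(B,B)$, $Y=\#E(A\cup B,C)$, $Z=\#E(A,B)$, the edge identity $|E|=X+Y+Z$ together with Property~(5) in the form $2X+Y\le 2Z$ yields at best
\[
\#E(A\cup C,B)=|E|-X-\tfrac{Y}{2}\ \ge\ \tfrac{|E|}{2}+\tfrac{Y}{4},
\]
and there is \emph{no} positive coefficient on $X$ available. So your ``Case~1'' only covers the regime where $Y$ is large, not where $\#E(A,A)$ or $\#E(B,B)$ is large. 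In particular, take $C=\emptyset$ (so $Y=0$): Property~(5) degenerates to $X\le Z$, giving only $\#E(A,B)\ge |E|/2$ with no extra term, while your Case~2 threshold can easily be violated (e.g.\ $\#E(A,A)\ge \tfrac{1}{3\Delta}|E|$). The two cases do not meet, and your final numerics (``$\ge(1-\tfrac{1}{\Delta})|E|\ge(\tfrac12+\tfrac{1}{3\Delta})|E|$ whenever $\Delta\ge 2$'') is also off: that inequality fails for $\Delta=2$.

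The missing ingredient is precisely the ``$+1$'' in Properties~(1)--(2), which you mention at the outset but never use. Summing $d_B(a)\ge d_A(a)+1$ over $A$ and the symmetric inequality over $B$ gives $Z\ge X+\tfrac{|A|+|B|}{2}$, and this is where $\Delta$ finally enters: the handshake bound $\Delta(|A|+|B|)\ge \sum_{v\in A\cup B}d(v)=2|E|-Y$, combined with $Y\le\tfrac{2}{3}|E|$ (a consequence of Property~(5)), yields $\tfrac{|A|+|B|}{2}\ge \tfrac{2}{3\Delta}|E|$. The paper's proof runs exactly along these lines: it establishes $|E|\le 2\,\#E(A\cup C,B)-\tfrac{|A|+|B|}{2}$ (this requires a short case split on whether $\#E(A,C)\le\min\{|A|,|B|\}$, using both halves of the $\max\{1,d_C(v)\}$), then substitutes the lower bound on $|A|+|B|$ and solves the resulting inequality for the ratio $\#E(A\cup C,B)/|E|$. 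Your sketch lists the right building blocks at the top but drops the one that actually carries~$\Delta$; without it the argument cannot close.
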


This is naturally related to the concept of `judicious partitions'. We recall a result of Bollobas \& Scott \cite{bollobas} which (among other things) implies the following: if $\Delta \geq 3$ is an odd integer, then every graph with maximal degree at most $\Delta$ has a partition $V = A \cup B$ with
$$ \# E(A, B) \geq \left(\frac{1}{2} + \frac{1}{2\Delta} \right) |E|.$$
This is optimal for the complete graph. We also refer to Alon \cite{alon1}, Alon \& Halperin \cite{alon2}, Alon, Bollobas, Krivelevich \& Sudakov \cite{alon3}, Alon, Krivelevich \& Sudakov \cite{alon4}, Bollobas \& Scott \cite{bollobas2},  Edwards \cite{edwards, edwards2}, Lee, Loh \& Sudakov \cite{lee},  Shearer \cite{shearer}, Xu, Yan \& Yu \cite{xu} and references therein.

\section{Randomized Rounding}

\subsection{\textsc{MaxCut}} Our construction has an interesting connection to the \textsc{MaxCut} problem which asks for a partition $V = A \cup B$ such the number of edges that run between $A$ and $B$ is maximized
$$ \textsc{MaxCut}(G) = \max_{A,B \subset V \atop A \cap B = \emptyset} \# E(A,B).$$
The problem is known to be NP-hard, even  approximating \textsc{Max-Cut} by any factor better than $16/17 \sim 0.941$ is NP-hard \cite{bell, hastad, trev}. 
We recall the simplest randomized algorithm for \textsc{MaxCut}: by putting each vertex randomly into one of the two sets and taking the expectation, we see that
$ \textsc{MaxCut}(G) \geq 0.5 \cdot |E|.$
No algorithm improving on this elementary estimate was known until Goemans \& Williamson \cite{goemans} introduced their seminal algorithm leading to a 
$0.878 \cdot  \textsc{MaxCut}(G)$ approximation. If the Unique Games Conjecture \cite{khot} is true, this is the best possible approximation ratio for \textsc{Max-Cut} that can be computed in polynomial time.
Trevisan \cite{trev2} introduced an algorithm based on spectral graph theory which improves on the $0.5-$approximation (the constant was later improved by Soto \cite{soto}). 

\begin{center}
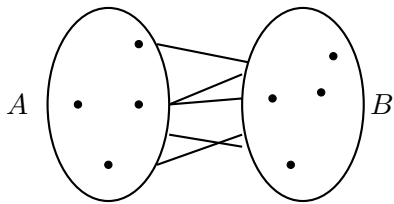
\begin{figure}[h!]
\begin{tikzpicture}[scale=0.8]
\filldraw (0,0) circle (0.06cm);
\filldraw (-0.5,1) circle (0.06cm);
\filldraw (0.5,2) circle (0.06cm);
\filldraw (0.5,1) circle (0.06cm);
\filldraw (3,0) circle (0.06cm);
\filldraw (3.2-0.5,1.1) circle (0.06cm);
\filldraw (3.7,1.8) circle (0.06cm);
\filldraw (3.5,1.2) circle (0.06cm);
\draw [thick] (0,1) ellipse (1cm and 1.6cm);
\draw [thick] (3.2,1) ellipse (1cm and 1.6cm);
\draw [thick] (0.8,2) -- (2.3,1.7);
\draw [thick] (1,1) -- (2.2,1.5);
\draw [thick] (0.8,0) -- (2.2,0.5);
\draw [thick] (1,0.5) -- (2.2,0.3);
\draw [thick] (1,1) -- (2.2,1.1);
\node at (-1.5, 1) {$A$};
\node at (4.5, 1) {$B$};
\end{tikzpicture}
\caption{$\textsc{MaxCut}$: partitioning vertices of a Graph into two sets $V = A \cup B$ so that many edges run between them.}
\end{figure}
\end{center}

\vspace{-10pt}

\begin{corollary} Let $G$ be a graph with maximal degree $\Delta$. The decomposition gives rise to a cut of size
$$  \# E(A \cup C, B) \geq  \left(\frac{1}{2} + \frac{1}{3\Delta} \right) \textsc{MaxCut}(G).$$
\end{corollary}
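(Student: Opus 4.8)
The plan is to derive this from the previous corollary, which gives $\# E(A \cup C, B) \geq \left(\frac{1}{2} + \frac{1}{3\Delta}\right)|E|$, simply by observing that $|E| \geq \textsc{MaxCut}(G)$. Indeed, a cut $E(A', B')$ is by definition a subset of all edges of $G$, so $\textsc{MaxCut}(G) = \max_{A', B'} \# E(A', B') \leq |E|$. Substituting this into the inequality of the previous corollary immediately yields
$$ \# E(A \cup C, B) \geq \left(\frac{1}{2} + \frac{1}{3\Delta}\right)|E| \geq \left(\frac{1}{2} + \frac{1}{3\Delta}\right)\textsc{MaxCut}(G),$$
since the coefficient $\frac{1}{2} + \frac{1}{3\Delta}$ is positive.

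The one subtlety worth flagging is that this statement is in fact strictly weaker than the previous corollary (because $|E| \geq \textsc{MaxCut}(G)$), so there is essentially no mathematical content beyond a one-line deduction; the point of stating it separately is purely expository, namely to phrase the judicious-partition bound as an approximation guarantee for \textsc{MaxCut} and thereby connect the decomposition to the algorithmic discussion of \S 2. Accordingly I would not expect any obstacle at all: the entire proof is the chain of inequalities above together with the remark that the decomposition is produced in polynomial time by the constructive algorithm of the main theorem, so the cut $A \cup C$ versus $B$ can be computed efficiently.

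If one wanted the statement to carry genuinely new information one would instead have to bound $\# E(A \cup C, B)$ against $\textsc{MaxCut}(G)$ \emph{without} passing through $|E|$ — for instance by arguing that the decomposition is itself close to an optimal cut — but that is not what is being claimed here, and the weaker form suffices for the intended application. So the proof is just: invoke the previous corollary, note $\textsc{MaxCut}(G) \le |E|$, and conclude.
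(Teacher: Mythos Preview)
Your proposal is correct and matches the paper's own argument exactly: the paper states immediately after the corollary that ``This is an easy consequence of Corollary 3 since $\textsc{MaxCut}(G) \leq |E|$.'' Your additional commentary about the expository purpose of the statement is also on point.
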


This is an easy consequence of Corollary 3 since $\textsc{MaxCut}(G) \leq |E|$. We note that the problem of finding approximations
of \textsc{MaxCut} is easier when one assumes that the graph has bounded degree. Stronger results can, for example, be found in 
\cite{bazgan, bondy, halperin} when $\Delta=3$. There is an inverse direction: given a solution to the \textsc{MaxCut}
problem, we can quickly derive a valid decomposition from it. This naturally raises the question of how quickly one can compute our decomposition
for a given graph: our proof is constructive and runs in polynomial time but may not be the fastest way of computing it. Another interesting
question is whether a typical decomposition discovered by running gradient descent on our functional might actually have the property that $ \# E(A \cup C, B)$
is often close to $\textsc{MaxCut}(G)$, i.e. whether this decomposition is a good \textsc{MaxCut} heuristic (see also \cite{stein}).

\subsection{Randomized Rounding}
Goemans \& Williamson \cite{goemans} propose to relax 
$$ 2 \cdot |E| - 4 \cdot \textsc{MaxCut}(G) =  \min_{x_i \in \left\{-1,1\right\}} \sum_{i,j=1}^{n} a_{ij} x_i x_j$$
by replacing the $x_i \in \left\{-1,1\right\}$ with unit vectors $v_i \in \mathbb{R}^n$ and $x_i x_j$ with $\left\langle v_i, v_j\right\rangle$. This is a more general problem
but one that can be solved with semi-definite programming in polynomial time. The optimal set of vectors is usually not going to be contained in two antipodal points:
Goemans \& Williamson propose to use a random hyperplane to induce a partition. This step is known as Randomized Rounding (see Fig. \ref{fig:rounding}).

 \begin{center}
\begin{figure}[h!]
\begin{tikzpicture}[scale=0.9]
\draw [thick] (0,0) circle (2cm);
\filldraw (0,0) circle (0.04cm);
\filldraw (2,0) circle (0.06cm);
\filldraw (1.9,0.6) circle (0.06cm);
\filldraw (1.92,-0.5) circle (0.06cm);
\filldraw (-1.82,0.8) circle (0.06cm);
\filldraw (-1.92,-0.5) circle (0.06cm);
\filldraw (-2,0) circle (0.06cm);
\filldraw (1.4142, 1.4142) circle (0.06cm);
\node at (2.3, 0) {$\theta_4$};
\node at (2.2, -0.6) {$\theta_6$};
\node at (-2.2, -0.6) {$\theta_{2}$};
\node at (-2.3, 0) {$\theta_{5}$};
\node at (1.75, 1.41) {$\theta_{7}$};
\node at (2.3, 0.55) {$\theta_{3}$};
\node at (-2.2, 0.9) {$\theta_{1}$};
\draw[dashed] (-1.8, -2.2) -- (1.8, 2.2);
\draw (-2,0) ellipse (1cm and 1.5cm);
\draw (1.8,0.2) ellipse (1.2cm and 1.5cm);
\end{tikzpicture}
\caption{Randomized Rounding: for given $\left\{\theta_1, \dots, \theta_n\right\} \subset \mathbb{S}^1$, we can pick a random line through the origin and the partition the vertices of the graph according to the two half-spaces.}
\label{fig:rounding}
\end{figure}
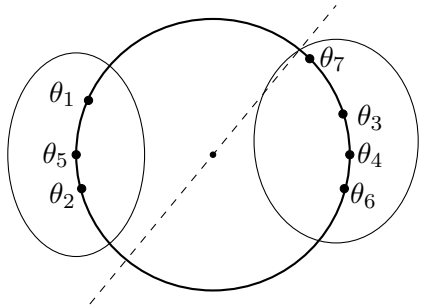
\end{center}

 Burer, Monteiro \& Zhang \cite{burer3} suggest that it might be possible to bypass the (computationally expensive) semi-definite programming by working directly with the relaxed problem in $\mathbb{R}^2$. Parametrizing points on $\mathbb{S}^1$ by an angle $\theta \in \mathbb{S}^1$, we see that
$$ \left\langle v_{\theta_i}, v_{\theta_j} \right\rangle = \cos{(\theta_i - \theta_j)}$$
and this leads to the notion of energy $f:(\mathbb{S}^1)^n \cong [0,2\pi]^n \rightarrow \mathbb{R}$
 $$ f(\theta_1, \dots, \theta_n) = \sum_{i,j=1}^{n} a_{ij} \cos{(\theta_i - \theta_j)},$$
 where $n = |V|$ and $A = (a_{ij})_{i,j=1}^{n}$ is the adjacency matrix of the graph. In particular, this suggests that we should minimize $f$ via some optimization scheme (simple gradient descent seems to suffice in practice) and then partition the arising set of vertices via randomized rounding.  
 The Burer-Monteiro-Zhang \cite{burer3} approach does not come with guarantees, however, in practice, this method works amazingly well. In a 2018 paper, Dunning, Gupta \& Silberholz \cite{dunning} compared 37 different heuristics over 3296 problem instances concluding that the Burer-Monteiro-Zhang approach was among the most effective. It is not theoretically understood why this relaxation works so well and this is of great interest, we refer to  Boumal, Voroninski, Bandeira \cite{boumal, boumal2}, Ling \cite{ling0} and Ling, Xu \& Bandeira \cite{ling}. It is also related to questions regarding Kuramoto oscillators \cite{burer1, burer2, burer3, review1, review2,kuramoto, jianfeng, taylor, townsend} and recent hardware-based oscillator approaches to \textsc{MaxCut}, see \cite{chou, mallick, wang, wang2}.

\subsection{The Functional.} It was recently pointed out by the author \cite{stein} that if we relax things to $\mathbb{S}^1$, we should aim to maximize efficiency with respect to the randomized rounding step. We restrict ourselves to energy functionals of the type
 $$ f(\theta_1, \dots, \theta_n) = \sum_{i,j=1}^{n} a_{ij} \cdot g(\theta_i - \theta_j),$$
where $g: \mathbb{S}^1 \cong [0,2\pi] \rightarrow \mathbb{R}$ is assumed to 
\begin{enumerate}
\item be differentiable everywhere,
\item be symmetric in the sense of $g(x) = g(-x)$ and
\item to assume its maximum in $g(0) = 1$ and its minimum in $g(\pi) = -1$.
\end{enumerate}
 Then it was shown in \cite{stein} that if we are given a minimal energy configuration of such an energy, then the expected number of edges recovered by a randomized rounding partition satisfies
$$ \mathbb{E}_{} ~\mbox{edges} \geq \left(  \min_{0 \leq x \leq \pi}   \frac{2}{\pi}  \frac{x}{1 -g(x)}  \right) \cdot \emph{\textsc{MaxCut}}(G).$$
For suitable choices of $g$, the ratio in front can be arbitrarily close to 1 (and is even equal to 1 in one special case, see below). Naturally, this shifts the problem of finding a good \textsc{MaxCut} to the question of how to find $(\theta_1, \dots, \theta_n) \in (\mathbb{S}^1)^n$ for which such a functional is small. Certainly one cannot hope to find a global minimum but perhaps it is possible to find a configuration which has small energy. It turns out that this is sufficient: finding values close to the global minimum implies finding cuts close to the maximal cut since for any $(\theta_1, \dots, \theta_n) \in (\mathbb{S}^1)^n$
$$ \mathbb{E}_{} ~\mbox{edges}(\theta_1, \dots, \theta_n) \geq  \left[\left(  \min_{0 \leq x \leq \pi}   \frac{2}{\pi}  \frac{x}{1 -g(x)}  \right)  \right] \cdot \left(\frac{|E|}{2} - \frac{1}{4} f(\theta_1, \dots, \theta_n)\right)$$
It was shown empirically in \cite{stein} that minimizing such functionals leads to very good performance: the underlying energy landscape
seems to be benign but there is relatively little that has been made precise in that direction.
We note that the approach in \cite{stein} singles out one particular function $g: \mathbb{S}^1 \rightarrow \mathbb{R}$
$$ g(x) = 1 - \frac{2}{\pi} \cdot d_{\mathbb{S}^1}(0, x) = \begin{cases} 1 - \frac{2 x}{\pi} \qquad &\mbox{if}~0 \leq x \leq \pi \\
 1 - \frac{2 (2\pi -x)}{\pi} \qquad &\mbox{if}~\pi \leq x \leq 2\pi, \end{cases}$$
which is also shown in Fig. \ref{fig:bestfun}.
 This is the function for which the ratio in front is equal to 1, the global minimum
of the functional corresponds to a configuration from which \textsc{MaxCut} can be recovered. 
\begin{proposition}[see also \cite{stein}] We have, for this particular choice of $g:\mathbb{S}^1 \rightarrow \mathbb{R}$,
$$2\cdot |E| - 4 \cdot \mathbb{E}\left(\emph{edges cut by randomized rounding}\right)=  \sum_{i,j=1}^{n} a_{ij} \cdot g(\theta_i - \theta_j).$$
\end{proposition}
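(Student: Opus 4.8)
The plan is to compute directly the expectation of the number of edges cut by randomized rounding for a fixed configuration $(\theta_1, \dots, \theta_n) \in (\mathbb{S}^1)^n$, and then to match the resulting expression against the energy $\sum_{i,j} a_{ij} g(\theta_i - \theta_j)$. The only randomness in the rounding step is the choice of a line through the origin; equivalently, pick an angle $\varphi$ uniformly at random in $[0, 2\pi)$ and let the line be the one through the origin at angle $\varphi$. This line partitions $\mathbb{S}^1$ into two arcs, placing each vertex $i$ into $A$ or $B$ according to which side of the line $v_{\theta_i}$ lies. An edge $\{i,j\}$ is cut precisely when $v_{\theta_i}$ and $v_{\theta_j}$ lie on opposite sides, i.e. when the random diameter separates the two points on the circle.

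The first key step is the elementary geometric fact that, for two fixed points on $\mathbb{S}^1$ at angles $\theta_i$ and $\theta_j$, a uniformly random diameter separates them with probability $d_{\mathbb{S}^1}(\theta_i, \theta_j)/\pi$, where $d_{\mathbb{S}^1}$ is the (shorter) arc-length distance, normalized so that antipodal points are at distance $\pi$. This is the classical Goemans--Williamson observation in two dimensions: the diameter separates $v_{\theta_i}$ and $v_{\theta_j}$ iff its defining angle $\varphi$ falls into one of the two arcs of length equal to the angle between the points, which happens with probability $2 \cdot (\text{angle})/(2\pi)$. Second, by linearity of expectation,
$$ \mathbb{E}\left(\text{edges cut}\right) = \sum_{\{i,j\} \in E} \frac{d_{\mathbb{S}^1}(\theta_i, \theta_j)}{\pi} = \frac{1}{2} \sum_{i,j=1}^{n} a_{ij} \frac{d_{\mathbb{S}^1}(\theta_i, \theta_j)}{\pi},$$
the factor $1/2$ accounting for each edge being counted twice in the double sum over the symmetric adjacency matrix. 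Third, I substitute the definition $g(x) = 1 - \tfrac{2}{\pi} d_{\mathbb{S}^1}(0,x)$, which rearranges to $\tfrac{d_{\mathbb{S}^1}(0,x)}{\pi} = \tfrac{1}{2}(1 - g(x))$, so that
$$ \mathbb{E}\left(\text{edges cut}\right) = \frac{1}{4} \sum_{i,j=1}^{n} a_{ij} (1 - g(\theta_i - \theta_j)) = \frac{1}{4}\left( 2|E| - \sum_{i,j=1}^{n} a_{ij} g(\theta_i - \theta_j)\right),$$
using $\sum_{i,j} a_{ij} = 2|E|$. Multiplying by $4$ and rearranging yields the claimed identity.

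I do not expect a serious obstacle here; the statement is essentially a bookkeeping identity once the probability-of-separation lemma is in hand. The one point that deserves care is the normalization conventions: making sure that $d_{\mathbb{S}^1}$ is the arc-length with antipodal distance $\pi$ (not $1$, and not the chordal distance), that the random line is parametrized so that $\varphi$ is genuinely uniform on a half-circle's worth of directions, and that the factor-of-two discrepancies between "edges" and "ordered pairs" are tracked consistently through the double sum. Provided these are fixed at the outset to agree with the definition of $g$ given in the excerpt, the computation closes immediately, and this is exactly the special structure that makes this particular piecewise-linear $g$ the one for which the randomized-rounding ratio equals $1$.
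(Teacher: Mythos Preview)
Your proof is correct and is exactly the computation the paper has in mind; the paper itself does not spell out the argument beyond remarking ``This is merely a computation,'' and your linearity-of-expectation plus the standard $d_{\mathbb{S}^1}(\theta_i,\theta_j)/\pi$ separation probability is precisely that computation.
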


 \begin{center}
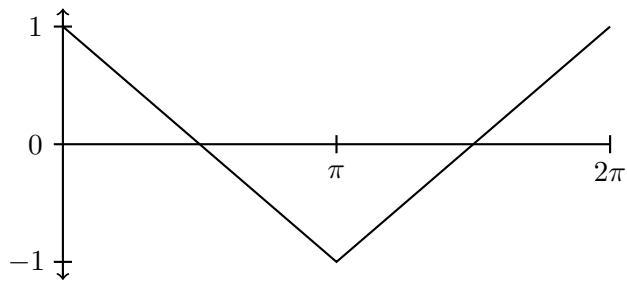
\begin{figure}[h!]
\begin{tikzpicture}[scale=1.2]
\draw [thick, <->] (0,-1.5) -- (0,1.5);
\node at (-0.3, 1.3) {$1$};
\draw [thick] (-0.1, 1.3) -- (0.1, 1.3);
\node at (-0.4, -1.3) {$-1$};
\draw [thick] (-0.1, -1.3) -- (0.1, -1.3);
\node at (-0.3, 0) {$0$};
\draw [thick] (-0.1, 0) -- (6, 0);
\draw [thick] (3, -0.1) -- (3,0.1);
\draw [thick] (6, -0.1) -- (6,0.1);
\node at (3,-0.3) {$\pi$};
\node at (6,-0.3) {$2\pi$};
\draw [thick] (0,1.3) -- (3,-1.3) -- (6,1.3);
\end{tikzpicture}
\caption{The function $g:\mathbb{S}^1 \rightarrow \mathbb{R}$.}
\label{fig:bestfun}
\end{figure}
\end{center}

This is merely a computation. Minimizing the functional is thus equivalent to maximizing the number of edges that are being cut by randomized rounding. It raises the  question whether one should perhaps not simply optimize $(\theta_1, \dots, \theta_n) \in (\mathbb{S}^1)^n$
using this particular functional. From a practical point of view, this is not currently understood but will also not be relevant for what follows.
For the remainder of the paper, we will only consider the particular functional 
$$ f(\theta_1, \dots, \theta_n) = \sum_{i,j=1}^{n} a_{ij} \cdot g(\theta_i - \theta_j),$$
where $g$ is exactly the function that makes Proposition 5 work
$$ g(x) = 1 - \frac{2}{\pi} \cdot d_{\mathbb{S}^1}(0, x) = \begin{cases} 1 - \frac{2 x}{\pi} \qquad &\mbox{if}~0 \leq x \leq \pi \\
 1 - \frac{2 (2\pi -x)}{\pi} \qquad &\mbox{if}~\pi \leq x \leq 2\pi. \end{cases}$$
We refer to $f(\theta_1, \dots, \theta_n)$ as the `energy' of the configuration $(\theta_1, \dots, \theta_n) \in (\mathbb{S}^1)^n$
and note the following elementary properties:
\begin{enumerate}
\item $f$ is continuous
\item $f$ is piecewise affine
\item $f$ is bounded from below
$$ \min_{\theta \in (\mathbb{S}^1)^n} f(\theta_1, \dots, \theta_n) = 2\cdot |E| - 4 \cdot \textsc{MaxCut}(G) \geq -2|E|$$
\item for each $\theta \in (\mathbb{S}^1)^n$, the function $h:\mathbb{S}^1 \rightarrow \mathbb{R}$ given by
$$ h(x) = f(\theta_1, \dots, \theta_{i-1}, x, \theta_{i+1}, \dots, \theta_n)$$
is piecewise linear and its derivative at each point where it is defined is of the form $2k/\pi$ for some $k \in \mathbb{Z}$.\\
\end{enumerate}

The first two properties are obvious, for the third property we note that Proposition 5 shows that the minimum cannot be smaller. If we take a \textsc{MaxCut} partition $V = A \cup B$ and define
$ \theta_i = 0$ for all $i \in A$ and $\theta_i = \pi$ for all $i \in B$, a short computation shows that equality is attained. Alternatively, we could argue that for this particular configuration almost every line in randomized rounding does realize the \textsc{MaxCut} and the desired statement follows. The fourth property follows quickly from the definition but is important and deserves a  careful explanation (see Fig. \ref{fig:prop4}).  
Let us fix $\theta_i$. There is the antipodal point of $\theta_i$ which then naturally subdivides $\mathbb{S}^1$ into two connected regions $L$ and $R$. Let us try to understand
what happens, in the explicit setting shown in Fig. \ref{fig:prop4}, to the function
$$ h(x) = f(\theta_1, \dots, \theta_{i-1}, x, \theta_{i+1}, \dots, \theta_n)$$
when we move $x$ in a neighborhood of $\theta_i$. If we move it to the right (i.e. increase $x$), then we increase the distance to two vertices that we
are connected to by an edge (the neighbors in $L$) while decreasing the distance to one vertex that we are connected to via an edge (the neighbor in $R$): this means the function will decrease.

 \begin{center}
\begin{figure}[h!]
\begin{tikzpicture}[scale=2]
\node at (0, -0.2) {$0$};
\draw [thick] (0,-0.1) -- (0,0.1);
\draw [thick] (0, 0) -- (6, 0);
\draw [thick] (6, -0.1) -- (6,0.1);
\node at (6,-0.2) {$2\pi$};
\filldraw (2, 0) circle (0.04cm);
\node at (2, -0.2) {$\theta_i$};
\draw [very thick] (5, 0) circle (0.04cm);
\node at (5, -0.2) {\small antipodal point};
\filldraw (0.3, 0) circle (0.04cm);
\filldraw (1.2, 0) circle (0.04cm);
\filldraw (2.5, 0) circle (0.04cm);
\filldraw (3.2, 0) circle (0.04cm);
\filldraw (4.5, 0) circle (0.04cm);
\filldraw (5.5, 0) circle (0.04cm);
\draw [thick] (2,0) to[out=30, in=150] (5.5, 0);
\draw [thick] (2,0) to[out=20, in=160] (3.2, 0);
\draw [thick] (2,0) to[out=120, in=40] (1.2, 0);
\draw [ultra thick] (2, -0.1) -- (2,0.3);
\draw [ultra thick] (5, -0.1) -- (5,0.3);
\node at (0.8, 0.1) {$L$};
\node at (3.9, 0.1) {$R$};
\node at (5.8, 0.1) {$L$};
\end{tikzpicture}
\caption{Understanding the derivative with respect to a single coordinate.}
\label{fig:prop4}
\end{figure}
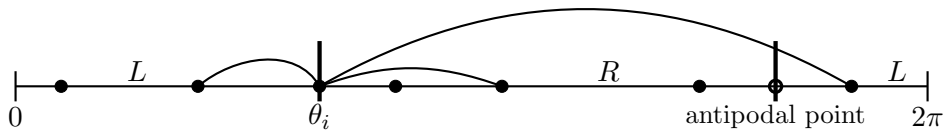
\end{center}

Conversely, going to the left (decreasing $x$), the function will increase. This holds true more generally, whenever the number of points we are connected
to `on the left' is larger than the number of points `on the right' (or vice versa), then we can further decrease the energy. Note that for the purpose of these
considerations, points that lie in the antipodal point of $\theta_i$ (should they exist) contribute to `both sides' since, no matter where we move to, the distance to them is being
decreased. \\

We conclude this section by arguing that this functional has a `benign' energy landscape. More precisely, the proof will show the following.
\begin{corollary}
For any arbitrary initial configuration $\theta \in (\mathbb{S}^1)^n$, there exists a continuous path $\gamma:[0,1] \rightarrow (\mathbb{S}^1)^n$
\begin{itemize}
\item such that $\gamma(0) = \theta$,
\item $f(\gamma(t))$ is monotonically decreasing
\item $\gamma(1)$ is a configuration from which it is possible to obtain the decomposition.
\end{itemize}
\end{corollary}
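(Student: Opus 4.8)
The plan is to build the path $\gamma$ as a concatenation of finitely many elementary moves, each of which either decreases $f$ or keeps it constant, and to argue that when no such move is available the configuration has a structure from which the decomposition can be read off. The key observation, already spelled out in Property (4) and Fig.~\ref{fig:prop4}, is that fixing all but one coordinate $\theta_i$ yields a piecewise-linear function $h(x)$ whose slope at a generic point is $\tfrac{2}{\pi}(\#\{\text{neighbors on the far side}\} - \#\{\text{neighbors on the near side}\})$. So from any configuration we can: (a) slide a single vertex toward wherever the ``far side'' is strictly larger, strictly decreasing $f$, until it meets an antipode-type obstruction; and (b) when a vertex is balanced (equal numbers of neighbors on each side of its antipodal point), slide it freely at zero cost. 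The first type of move cannot repeat indefinitely because, by Property (3), $f$ is bounded below and its values at breakpoints lie in the discrete set $\tfrac{2}{\pi}\mathbb{Z} + (\text{something})$ — more carefully, one shows each genuine descent step decreases $f$ by a fixed positive amount, so after finitely many of them we reach a configuration that is a local minimum in each coordinate direction.

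First I would make precise what such a coordinatewise local minimum looks like. For each vertex $i$, letting $p_i$ denote the antipode of $\theta_i$, the neighbors of $i$ split into those strictly on one open arc, those strictly on the other, and those sitting exactly at $p_i$; the latter contribute to \emph{both} sides. Local optimality in the $i$-th coordinate forces the two counts (with antipodal neighbors counted on both sides) to be equal — otherwise move (a) applies. Next I would use move (b) and a rotation of the whole configuration to \emph{collapse} the configuration onto the two-point set $\{0,\pi\}$: starting from a coordinatewise minimum, I claim one can continuously push every $\theta_i$ to either $0$ or $\pi$ while never increasing $f$. The mechanism is to repeatedly take a vertex not yet at $\{0,\pi\}$ whose neighbors are already (partly) placed and slide it, using the slope formula, toward $0$ or toward $\pi$ whichever does not increase energy; a vertex with a strict imbalance moves downhill, a balanced vertex moves at zero cost, and one has to check (this is the fiddly part) that ties and simultaneous coincidences can be handled by infinitesimal generic perturbations so that the whole thing is a genuine continuous path with $f\circ\gamma$ monotone.

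Once $\gamma(1)$ lies in $\{0,\pi\}^n$, define $A = \{i : \theta_i = 0\}$, $B = \{i : \theta_i = \pi\}$ tentatively; but to get the third set $C$, I would go back to the coordinatewise-minimum condition and reinterpret it in this two-point configuration. For $i \in A$, the antipode is $\pi$, so ``neighbors on the near side of $0$'' are the neighbors in $A$, ``on the far side'' are a subset of $B$, and neighbors sitting exactly at the antipode $\pi$ are all of $N(i)\cap B$: the balance condition at $i$ then reads $d_A(i) + d_B(i) \ge d_B(i)$, which is vacuous — so I must instead use the stronger condition that $\gamma(1)$ minimizes $f$ among all configurations in $\{0,\pi\}^n$ reachable by the single-vertex flips, i.e. it is a local MaxCut (an external partition). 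An external partition gives $d_{\text{other}}(v) \ge d_{\text{same}}(v)$ for all $v$; the vertices where equality holds are exactly the candidates for $C$. I would then define $C$ to be a carefully chosen subset of the equality vertices and $A,B$ the rest, reshuffling the equality vertices between the three classes (this does not change $\#E(A,B)$ up to the bookkeeping in Properties (3)–(4)) so that Properties (1)–(4) hold with the $\max\{1,d_C(v)\}$ slack; this reshuffling step is essentially the ``deduce the decomposition from an arbitrary MaxCut solution'' remark made right after the Theorem, and I would simply invoke/reprove that combinatorial lemma here.

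The main obstacle I anticipate is \textbf{step (b)}: showing that the zero-cost and downhill single-vertex slides can be orchestrated into one \emph{continuous} path that collapses an arbitrary coordinatewise-local-minimum onto $\{0,\pi\}^n$ without ever increasing $f$, especially handling the non-generic situations (several vertices at the same point, a vertex's neighbors symmetric about its antipode so that \emph{no} direction strictly decreases, clusters that must move together). The likely fix is a two-layer argument: first perturb to a generic configuration at zero first-order cost, run a clean descent there, and then note that genericity is preserved along the constructed path except at finitely many times, which can be crossed by short detours; and second, if a balanced vertex is genuinely stuck, observe that rotating a whole balanced \emph{component} of the ``tie graph'' is also a zero-cost move, which is enough to break the deadlock. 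Bounding the number of moves (hence guaranteeing the path has finite length / the procedure terminates) uses Property (3) plus the discreteness of the breakpoint values of $f$.
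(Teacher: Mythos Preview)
Your outline has the right shape, but there is a real gap exactly where you flag the ``main obstacle'': reducing an arbitrary configuration to one supported on two antipodal points. Single-vertex slides do not suffice for this. A coordinatewise local minimum of $f$ can have points scattered all over $\mathbb{S}^1$, and your proposed fixes (generic perturbation, rotating ``balanced components of the tie graph'') are not made precise; in particular nothing you wrote prevents zero-cost single-vertex moves from cycling indefinitely among flat configurations. The paper's key move is different and \emph{collective}: pick any occupied site $\theta_i$, freeze all vertices currently at $\theta_i$ or at $\theta_i+\pi$, and rotate \emph{every remaining vertex simultaneously by the same angle} until one of them lands on $\{\theta_i,\theta_i+\pi\}$. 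Because pairwise distances within the rotated block are unchanged, only edges between the two frozen clusters and the rest affect $f$, and one of the two rotation directions is always non-increasing. Crucially, this move never separates two vertices that already coincide or are antipodal, so the number of coincident-or-antipodal pairs strictly increases at each step; this gives termination in at most $\binom{n}{2}$ steps regardless of whether $f$ actually dropped along the way.

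Your endgame is also too compressed. Once in $\{0,\pi\}^n$ you want to invoke a purely combinatorial reshuffling to produce $A\cup B\cup C$, but $\gamma$ must remain a continuous path in $(\mathbb{S}^1)^n$ with $f\circ\gamma$ monotone, so the reshuffling has to be realized by moves on the circle. The paper does this by introducing a third location $C$ at the midpoint, sliding tie vertices there at zero cost, and then testing Properties (1)--(4) one at a time: whenever a property fails there is an explicit continuous move that decreases $f$ by at least $1$ (for instance, if two vertices in $C$ are adjacent, slide one of them to the antipode of $C$), after which one reruns the rotation argument from scratch at a strictly lower energy. Global termination comes from the lower bound $f\ge 2|E|-4\,\textsc{MaxCut}(G)$ together with the unit drop at each restart. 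Finally, your slope computation in the two-point configuration is garbled: vertices sitting at the \emph{same} position as $i$ also lie at a kink of $g$ and must be counted (moving $i$ in either direction increases the distance to them), which is why the correct one-sided derivative yields $d_B(i)\ge d_A(i)$ directly rather than the vacuous inequality you obtained.
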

This shows that the procedure does not get trapped in `bad' local minima because they do not exist: if we end up in a strict local minimum, we can always obtain a partition.
However, the procedure may encounter critical points and even entire regions where $f$ is locally constant. In particular, $f(\gamma(t))$ may be piecewise constant along the path guaranteed by Corollary 6, it need not be strictly monotonically decreasing. It is always possible to either leave each critical point (or `region of flatness') using an explicit procedure or it is possible to turn said critical point into a way of obtaining the decomposition.

\section{Proof of the Theorem}
\subsection{Outline.} 
The strategy is quite simple: we essentially prove Corollary 6. We do this by illustrating how it is possible to continuously deform the points  (up to at most a finite number of jumps) in a way that does not increase the energy. With these procedures, we will find a continuous deformation to a critical point. A different type of procedure will then allow us to escape most critical points in a way that will further decrease the energy by at least 1. The only type of critical point that we cannot escape will be the ones corresponding to the type of decomposition whose existence we are trying to prove. Since the energy is bounded from below
$$ \min_{\theta \in (\mathbb{S}^1)^n} f(\theta_1, \dots, \theta_n) = 2\cdot |E| - 4 \cdot \textsc{MaxCut}(G) \geq -2 |E|$$
we cannot escape critical points forever and have to ultimately end up in one where we cannot escape. This then corresponds to a decomposition of the type we try to construct.
We start with a simple case-distinction: either all the $n$ variables $(\theta_1, \dots, \theta_n) \in (\mathbb{S}^1)^n$ are located at two antipodal points or not. More formally
\begin{enumerate}
\item either there exists $\theta \in \mathbb{S}^1$ such that 
$$ \forall~1 \leq i \leq n: ~\theta_i = \theta \qquad \mbox{or} \qquad \theta_i = \theta + \pi$$
\item or not.
\end{enumerate}
We will refer to the first case as an `antipodal configuration'. The first part of the argument is to show that each configuration can be continuously deformed into an antipodal
configuration without increasing the energy. We note that ultimately both steps of the argument, (1) reducing an arbitrary configuration to an antipodal configuration without increasing
the energy and (2) modifying an antipodal configuration to either result in a valid decomposition or in a configuration with lower energy, might be applied several times in 
a row: every time (2) does not result in a valid decomposition, it will result in a new non-antipodal configuration with lower energy which will then be transformed into another antipodal configuration.

\subsection{The Rotation Argument.} 
We start by assuming that the configuration $(\theta_1, \dots, \theta_n) \in (\mathbb{S}^1)^n$ is not an antipodal configuration. Then there exists at least one point $\theta_i$ such that not all points are
contained in $\left\{\theta_i, \theta_i + \pi \right\}$. 
We partition the $n$ points $\theta_1, \dots, \theta_n$ into 4 sets
$$ \left\{\theta_1, \dots, \theta_n\right\} = A_{\theta_i} \cup A_{\theta_i + \pi} \cup A_L \cup A_R.$$
 The first set $A_{\theta_i}$ all the points that are in the same spot as $\theta_i$. This set is non-empty since it contains, tautologically, $\theta_i$ but it may also contain other points.
 The set $A_{\theta_i + \pi}$ contains all the antipodal vertices, all the points that are exactly antipodal to the points in $A_{\theta_i}$. This set might be empty.
 $A_L$, $A_R$ are the points on the left and on the right side. Since everything is only defined up to rotational symmetry, it is a bit arbitrary which of the two sets is considered to be on the `left' side. For simplicity, we assume things to be labeled as in Fig. \ref{pic:rotation}. Since, by definition of $\theta_i$, not all points are
contained in $\left\{\theta_i, \theta_i + \pi \right\}$, we can conclude that $A_L \cup A_R \neq \emptyset$.
We can now consider four quantities: the number of edges between vertices in $A_{\theta_i}$ and vertices in $A_L$ and $A_R$, respectively, together with the number
of edges between vertices in $A_{\theta_i + \pi}$ (this may be the empty set) and vertices in $A_L$ and $A_R$, respectively. A short computation shows (see Fig. \ref{pic:rotation})
if 
$$ \#  E(A_{\theta_i}, A_R) + \#  E(A_{\theta_i + \pi}, A_L) > \#  E(A_{\theta_i}, A_L) + \#  E(A_{\theta_i + \pi}, A_R),$$
then a counterlockwise rotation of all the points in $A_L \cup A_R$ by the same small angle decreases the energy. This can be done at least until the first point in $A_L \cup A_R$
starts being in either $A_{\theta_i}$ or $A_{\theta_i + \pi}$ which is when we stop.

\begin{center}
\begin{figure}[h!]
\begin{tikzpicture}
\draw [thick] (0,0) circle (2cm);
\filldraw (0,-2) circle (0.08cm);
\node at (0, -2.5) {$A_{\theta_i}$};
\node at (1, 2.5) {$A_{\theta_i + \pi}$ (possibly empty)};
\draw [dashed] (0,-2) -- (0,2);
\node at (-2.3,0) {$A_L$};
\node at (2.3,0) {$A_R$};
\draw [ultra thick] (0,-2) circle (0.2cm);
\draw [ultra thick] (0,2) circle (0.2cm);
\draw [thick, <-] (-2.2, 1) to[out= 60, in = 210] (-1.2, 2);
\draw [thick, ->] (-2.2, -1) to[out= 300, in = 150] (-1.2, -2);
\draw [thick, <-] (-2.2, 1) to[out= 60, in = 210] (-1.2, 2);
\draw [thick, ->] (1.2, -2) to[out= 30, in = 240] (2.2, -1) ;
\draw [thick, <-] (1.2, 2) to[out= 330, in = 120] (2.2, 1) ;
\end{tikzpicture}
\caption{Rotating every point except the points in $A_{\theta_i}$ and $A_{\theta_i + \pi}$.}
\label{pic:rotation}
\end{figure}
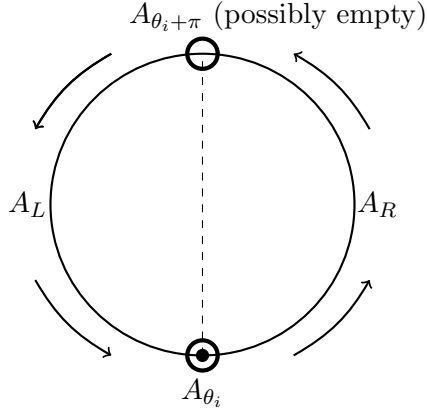
\end{center}

 Conversely, if
$$ \#  E(A_{\theta_i}, A_R) + \#  E(A_{\theta_i + \pi}, A_L) < \#  E(A_{\theta_i}, A_L) + \#  E(A_{\theta_i + \pi}, A_R),$$
then a clockwise rotation will decrease the energy -- again this will be carried out until the first point in $A_L \cup A_R$
hits either $A_{\theta_i}$ or $A_{\theta_i + \pi}$. This leads to a continuous deformation of points for which we
are guaranteed that the energy will decrease until a point that was not in $ A_{\theta_i} \cup A_{\theta_i + \pi}$ suddenly is. If
$$ \#  E(A_{\theta_i}, A_R) + \#  E(A_{\theta_i + \pi}, A_L)  =  \#  E(A_{\theta_i}, A_L) + \#  E(A_{\theta_i + \pi}, A_R),$$
then it is possible to rotate in either direction without changing the energy: the energy is not going to decrease but it is
not going to increase either. We rotate until a point that was not in $ A_{\theta_i} \cup A_{\theta_i + \pi}$ suddenly is.

We now observe that this procedure never breaks the property of `two points are in the same spot'.  If two points are
in the same spot, they will remain in the same spot in all future steps of the algorithm no matter what happens. Likewise,
the algorithm preserves the property of `two points are in antipodal position'. Once two points are in antipodal position,
they will remain so for all time.
We see that this procedure increases the number of pairs of points that are either in the same spot or antipodal to each other always
by at least 1. Thus it must end after a finite amount of steps and it can only end in an antipodal configuration.\\

\textit{A Different Perspective.} This paragraph can be skipped, its purpose is to present the rotation argument in a different form
for clarity of exposition. We fixed the sets $A_{\theta_i}$ and $A_{\theta_i + \pi}$ and then rotated all the
other points. However, we can change the perspective and instead fix all the points and merely move all the points in 
$A_{\theta_i} \cup A_{\theta_i + \pi}$ either both a little bit to the left or a little bit to the right (see Fig. \ref{fig:otherexp}).
Again, this naturally partitions the remaining space into two connected regions which we call $A_L$ and $A_R$. Suppose now we move
all the points in $A_{\theta_i} \cup A_{\theta_i + \pi}$ slightly too the left. This decreases distances for the edges $\#E (A_{\theta_i}, A_L)$
and $\#E (A_{\theta_i+\pi}, A_R)$ while increasing the distance of edges $\#E (A_{\theta_i}, A_R)$
and $\#E (A_{\theta_i+\pi}, A_L)$. If the total net change has a sign, we can go in the direction where the net change is negative until colliding with (one of the now stationary)
points for the first time (and this collision may happen in either of the two sets $A_{\theta_i}$ and $A_{\theta_i + \pi}$). If the net change is invariant, we can go in either direction until colliding. Also here, we see that more and more points end up being
in the same spot or antipodal and this relationship is never broken by subsequent applications of the algorithm.
 \begin{center}
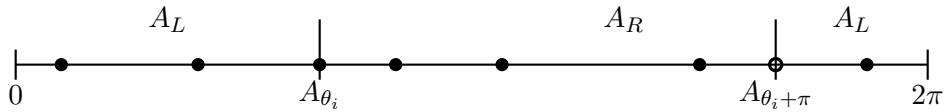
\begin{figure}[h!]
\begin{tikzpicture}[scale=2]
\node at (0, -0.2) {$0$};
\draw [thick] (0,-0.1) -- (0,0.1);
\draw [thick] (0, 0) -- (6, 0);
\draw [thick] (6, -0.1) -- (6,0.1);
\node at (6,-0.2) {$2\pi$};
\filldraw (2, 0) circle (0.04cm);
\node at (2, -0.2) {$A_{\theta_i}$};
\draw [very thick] (5, 0) circle (0.04cm);
\node at (5, -0.2) {$A_{\theta_i + \pi}$};
\filldraw (0.3, 0) circle (0.04cm);
\filldraw (1.2, 0) circle (0.04cm);
\filldraw (2.5, 0) circle (0.04cm);
\filldraw (3.2, 0) circle (0.04cm);
\filldraw (4.5, 0) circle (0.04cm);
\filldraw (5.6, 0) circle (0.04cm);
\draw [thick] (2, -0.1) -- (2,0.3);
\draw [thick] (5, -0.1) -- (5,0.3);
\node at (4, 0.3) {$A_R$};
\node at (1, 0.3) {$A_L$};
\node at (5.5, 0.3) {$A_L$};
\end{tikzpicture}
\vspace{-5pt}
\caption{Fixing all the points and moving $A_{\theta_i} \cup A_{\theta_i + \pi}$.}
\label{fig:otherexp}
\end{figure}
\end{center}

\subsection{An Antipodal Configuration.}
It remains to understand the case where all the angles are located in either the same point or the anti-podal point (see Fig. \ref{pic:type1}). 
We use $C$ to denote the mid-point between $A$ and $B$ (there are two possible choices for $C$ and it does not matter which one we pick).
If we now pick any vertex in $A$ or $B$ and move it towards $C$, the energy could further decrease: if so, then we have found a further
direction in which to decrease the energy, we use this direction to actually decrease the energy by at least 1 and then we use the rotation argument to return
to another antipodal configuration (now with lower energy). This can only happen finitely many times. Eventually, at some point, we have the property that moving any vertex from $A$ or $B$ towards $C$ does not decrease
the energy further. For each vertex, it might increase the energy or it might preserve the energy.
\begin{center}
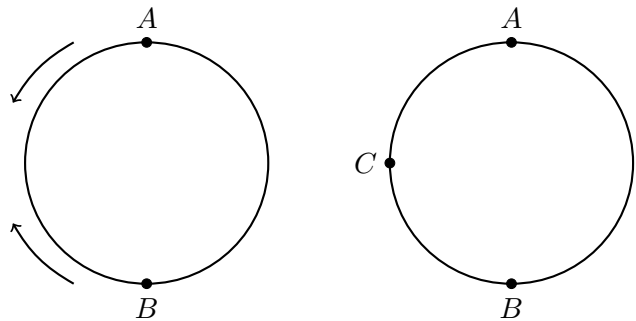
\begin{figure}[h!]
\begin{tikzpicture}[scale=0.8]
\draw [thick] (0,0) circle (2cm);
\filldraw (0,-2) circle (0.08cm);
\node at (0, -2.4) {$B$};
\filldraw (0,2) circle (0.08cm);
\node at (0, 2.4) {$A$};
\draw [thick, <-] (-2.2, 1) to[out= 60, in = 210] (-1.2, 2);
\draw [thick, <-] (-2.2, -1) to[out= 300, in = 150] (-1.2, -2);
\draw [thick] (6,0) circle (2cm);
\filldraw (6,-2) circle (0.08cm);
\node at (6, -2.4) {$B$};
\filldraw (6,2) circle (0.08cm);
\node at (6, 2.4) {$A$};
\filldraw (4,0) circle (0.08cm);
\node at (3.6, 0) {$C$};
\end{tikzpicture}
\caption{An antipodal configuration. Moving certain individual points from $A$ and $B$ to a common point $C$.}
\label{pic:type1}
\end{figure}
\end{center}

Once we are in this situation, an antipodal configuration where there is not a single point in $A$ or $B$ for which moving
it to $C$ further decreases the energy, we check whether there are any points which have the property that moving them to $C$ 
does not change the energy. If there are no such points, then we can set $C = \emptyset$. It is not too difficult to see that
we have then found a valid decomposition: since we cannot move any single point, we see that this implies
for all $v \in A$
$$ d_B(v) \geq d_A(v) + 1$$
and, likewise, for all $v \in B$ that $d_A(v) \geq d_B(v) + 1$.  If $C = \emptyset$, then the proof is complete at this stage. It remains to deal with the case where at least one point can be moved
to $C$ without changing the energy.

\subsection{Properties (1) and (2).}
If there exists a vertex in $A$ or $B$ such that moving it towards $C$ does not change the energy, then we do so while such
points exist. We note that
the order in which we move the vertices might play a role (moving a certain vertex may make it impossible to later
move another vertex which could originally have been chosen).  We move vertices in some arbitrary order from both $A$ and $B$ until we longer can. 
We can then conclude that, for each vertex $a \in A$, that moving it towards $C$ increases the energy: this implies that 
$$ d_C(a) + d_B(a) - d_A(a) \geq 1$$
since this number is integer-valued and if it were $\leq 0$, then we could move $a$ to $C$ without increasing the energy. 
Let us
now consider moving such an element $a \in A$ away from $A$ and $C$ and towards $B$ along the other side of the arc. 
If this decreases the energy, then we have found a way of decreasing the energy and by moving the point to the antipodal
point of $C$, we can decrease the energy by at least 1. We then return to the first part of the proof and use the rotation
argument to produce another antipodal configuration. We can thus assume that moving an element $a \in A$ away from 
$A$ and $C$ and towards $B$ along the other side of the arc will either increase the energy or keep it the same: 
this implies
$$ - d_C(a) + d_B(a) - d_A(a) \geq 0.$$
Combining both inequalities, we see that
$$ d_B(a) - d_A(a) \geq \max\left\{ 1 - d_C(a), d_C(a) \right\} = \max\left\{1, d_C(a) \right\}.$$
The same argument can be carried out in $B$ (the situation
is completely symmetric).

\subsection{Property (3)}
We now argue that there are no edges between two vertices located in the point $C$. Note first, that 
$$ g\left( \frac{\pi}{2} \right) = 0 = g\left( \frac{3\pi}{2} \right).$$
This means that, in terms of energy, there is no interaction between points in $C$ and points in $A \cup B$ (independently of how many edges there are) because $g$ vanishes. Moreover, each point in $C$ has exactly the same number of neighbors in $A$ and $B$. If there are now two vertices $c_1, c_2 \in C$ that are connected by
an edge, then moving one of these vertices, say $c_2$, to the antipodal point of $C$ can only affect edges that 
run between $c_2$ and other points in $C$ (and there is at least one such edge). Then, however, we see
that the energy decreases by 2 for each such edge if we do this. Moreover, we can take $c_2$ and move it continuously
to the other half of the circle: since $c_2$ has the same number of neighbors in $A$ and $B$, they do not have any effect
and the only effect arises from the edges between $c_2$ and the other vertices in $C$ (and there exists at least one such edge).
 We do it, if possible, reach a lower energy level and then go back to the first part of the argument, create another
antipodal configuration with lower energy until eventually there are no more edges between any
two vertices in $C$.

\begin{center}
\begin{figure}[h!]
\begin{tikzpicture}[scale=1]
\draw [thick] (0,0) circle (2cm);
\filldraw (0,-2) circle (0.08cm);
\node at (0, -2.3) {$B$};
\filldraw (0,2) circle (0.08cm);
\node at (0, 2.3) {$A$};
\filldraw (-2,0) circle (0.08cm);
\node at (-3, 0) {$c_1, c_2 \in C$};
\draw [dashed, thick] (-2,0) -- (0,2) -- (0,-2) -- (-2,0);
\draw [thick] (6,0) circle (2cm);
\filldraw (6,-2) circle (0.08cm);
\node at (6, -2.3) {$B$};
\filldraw (6,2) circle (0.08cm);
\node at (6, 2.3) {$A$};
\filldraw (4,0) circle (0.08cm);
\node at (3.7, 0) {$C$};
\filldraw (8,0) circle (0.08cm);
\node at (8.3, 0) {$c_2$};
\draw [dashed] (8,0) -- (6,2) -- (6,-2) -- (8,0) -- (4,0) -- (6,2) -- (6,-2) -- (4,0);
\end{tikzpicture}
\caption{Reflecting a single point from $C$ that is connected to another point in $C$ decreases the energy.}
\label{pic:ref}
\end{figure}
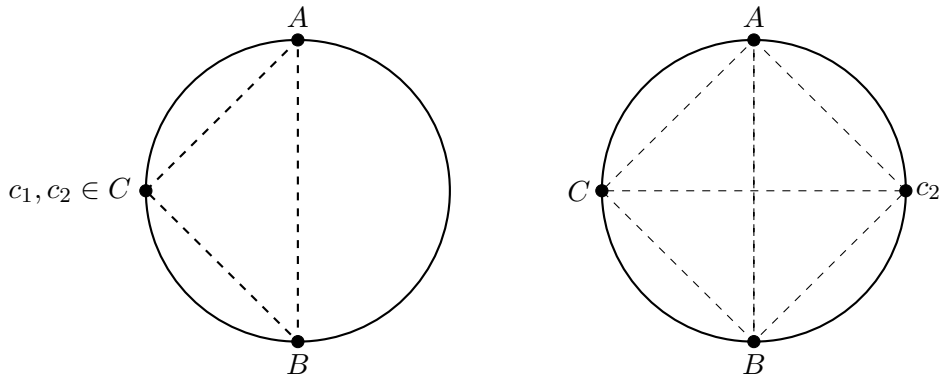
\end{center}

\subsection{Property (4).}
We may thus assume that there are no edges between any two vertices in $C$. We now see that by moving a single point in 
$C$ either up or down, we may be able to further decrease the energy. If this is the case, we have found another configuration
with yet smaller energy, we can decrease the energy by at least 1 and return to the first part of the argument to produce yet
another antipodal configuration with lower energy. We may thus assume that this is not possible: then, however, moving the points in $C$ up or down
leaves the energy invariant from which we can deduce that each point in $C$ has the same number of neighbors in $A$ and
$B$
$$ \forall c\in C: \qquad  d_A(c) = d_B(c).$$
This means that we can actually move the points in $C$ anywhere we want, it does not have any effect on the energy. This can also
be seen from the interpretation of the functional measuring the expected number of edges that are being cut by randomized
rounding: since each vertex in $C$ has the same number of neighbors in $A$ and $B$, it does not actually matter in which of
the two sets it ends up and thus, correspondingly, it does not affect the energy where on $\mathbb{S}^1$ it is located. Property (5)
has been shown above to be a consequence of Property (1) and Property (2) and the proof is complete.

\subsection{Remark.} We note that the proof of the Theorem was constructive. If the only goal would have been to show existence, then we could have assumed that the initial configuration $\theta \in (\mathbb{S}^1)^n$ is antipodal and corresponds to a solution of the \textsc{MaxCut} problem on the graph. This has the advantage that
$$  f(\theta_1, \dots, \theta_n) = 2\cdot |E| - 4 \cdot \textsc{MaxCut}(G)$$
is then already at the smallest possible value. In particular, it cannot decrease any further which simplifies the conceptual layout in the argument: there is never any further
descent direction. The rest of the argument remains the same. Naturally, this approach presupposes that one can find \textsc{MaxCut} which is NP-hard in general while our proof 
can be initialized with any arbitrary configuration and runs in polynomial time. In practical applications, it may be a advantageous to initialize with a \textsc{MaxCut} or approximate \textsc{MaxCut}
solution since it shortens the number of times that a smaller energy level can be encountered.

\section{Proof of Corollary 3}
We will try to bound the number of edges running between the two sets $A \cup C$ and $B$. We recall that
$$ \# E (A,C) = \# E(B,C),$$
so we are guaranteed to capture exactly half the edges of $E(C, A \cup B)$. It remains to study the behavior of the edges that run
between vertices of $A \cup B$. Let us call this restricted graph $H$ (see Fig. \ref{pic:rest} for a sketch). We first use the fact that each vertex in $A$ has more
neighbors in $B$ and vice versa to argue that
\begin{align*}
 |A| +|B| &\leq \sum_{a \in A} (d_B(a) - d_A(a)) + \sum_{b \in B} (d_A(b) - d_B(b)) \\
 &= 2\# E(A,B) - 2 \# E(A,A) - 2\# E(B,B).
 \end{align*}
Thus
$$   \# E(A,B)  \geq  \# E(A,A)  + \# E(B,B) +  \frac{|A| + |B|}{2}.$$
 \begin{center}
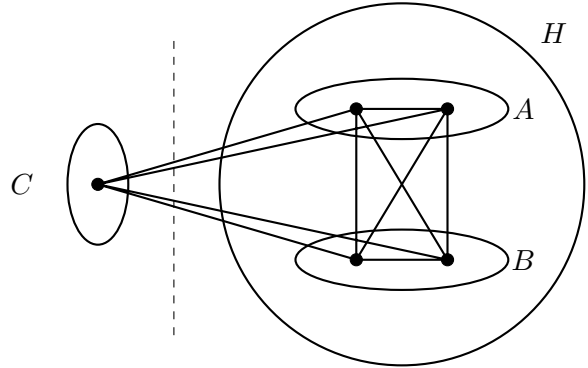
\begin{figure}[h!]
\begin{tikzpicture}[scale=2]
\draw [thick] (4,1) ellipse (0.7cm and 0.2cm);
\draw [thick] (4,0) ellipse (0.7cm and 0.2cm);
\draw [thick] (2, 0.5) ellipse (0.2cm and 0.4cm);
\filldraw (3.7,0) circle (0.04cm);
\filldraw (4.3,0) circle (0.04cm);
\filldraw (3.7,1) circle (0.04cm);
\filldraw (4.3,1) circle (0.04cm);
\filldraw (2,0.5) circle (0.04cm);
\draw[thick] (3.7,0) -- (4.3,0) -- (4.3,1) -- (3.7, 1) -- (2, 0.5) -- (3.7, 0) -- (3.7, 1) -- (4.3,0) -- (2, 0.5) -- (4.3, 1);
\draw[thick] (3.7,0) -- (4.3, 1);
\node at (4.8, 1) {$A$};
\node at (4.8, 0) {$B$};
\node at (1.5, 0.5) {$C$};
\draw [dashed] (2.5, -0.5) -- (2.5, 1.5);
\draw[thick] (4,0.5) circle (1.2cm);
\node at (5, 1.5) {$H$};
\end{tikzpicture}
\caption{$C$ contributes a precisely controlled amount, we ignore the subgraph on the vertices $A \cup B$ in isolation and call it $H$.}
\label{pic:rest}
\end{figure}
\end{center}
 This inequality is true independently of what happens in the set $C$ (recall that $C$ could be empty). However, using the refined formulation of Properties (1) and (2), 
 we see that as soon as $\#E(A,C) > |A|$, we necessarily have to have additional edges between $A$ and $B$. This allows
 us to improve the estimate by also incorporating this potential contribution
 $$   \# E(A,B)  \geq  \# E(A,A)  + \# E(B,B) +  \frac{|A| + |B|}{2} + \max\left\{0, \# E(A,C) - |A| \right\}.$$
By symmetry of the sets $A$ and $B$, we also have
 $$   \# E(A,B)  \geq  \# E(A,A)  + \# E(B,B) +  \frac{|A| + |B|}{2} + \max\left\{0, \# E(A,C) - |B| \right\}$$
from which we deduce
 \begin{align*}
    \# E(A,B)  &\geq  \# E(A,A)  + \# E(B,B) +  \frac{|A| + |B|}{2} \\
    &+ \max\left\{0, \# E(A,C) - \min\left\{|A|, |B| \right\} \right\}.
    \end{align*}

This is a lower bound on the number of edges that we can capture with the split $V = (A \cup C) \cup B$. It remains to understand
how this compares to the total number of edges.
Using the inequality just derived
\begin{align*}
 \# E(H) &= \# E(A,A)  + \# E(B,B) + \# E(A,B) \\
 &\leq 2 \#E(A,B) - \frac{|A| + |B|}{2} -  \max\left\{0, \# E(A,C) - \min\left\{|A|, |B| \right\} \right\}.
 \end{align*}
We now distinguish between two cases.\\

\textbf{Case 1. ($ \# E(A,C) \leq \min\left\{|A|, |B| \right\}$)}. In the first case, we have
$$  \# E(H) \leq 2 \#E(A,B) - \frac{|A| + |B|}{2}$$
and therefore, since $\# E (A,C) = \# E(B,C)$,
\begin{align*}
 |E| &= \# E(H) + \# E(A \cup B, c) \\
 &\leq 2 \#E(A,B) - \frac{|A| + |B|}{2} + 2\#E(B,C)\\
 &=  2 \#E(A \cup C,B) - \frac{|A| + |B|}{2}.
 \end{align*}
From this we can deduce that
$$ \frac{ \# E(A \cup C, B)}{|E|} \geq \frac{ \# E(A \cup C, B)}{2 \# E(A \cup C, B) - \frac{|A| + |B|}{2}}.$$

\textbf{Case 2. ($ \# E(A,C) \geq \min\left\{|A|, |B| \right\}$)}. In that case,  we have
\begin{align*}
 \# E(H) \leq 2 \#E(A,B) - \frac{|A| + |B|}{2} -  \# E(A,C) + \min\left\{|A|, |B| \right\}.
 \end{align*}
Since $\# E(A,C) = \# E(B,C)$, the total number of edges in the graph $G$ satisfies
\begin{align*}
|E| &\leq 2 \#E(A,B) - \frac{|A| + |B|}{2} + \# E(A,C) + \min\left\{|A|, |B| \right\}.
 \end{align*}
 Using the inequality defining Case 2 and $\# E(A,C) = \# E(B,C)$, we have
\begin{align*}
|E| &\leq 2 \#E(A,B) - \frac{|A| + |B|}{2} + 2\# E(A,C) \\
&= 2 \# E(A \cup C, B)- \frac{|A| + |B|}{2}.
 \end{align*}
This means that the ratio satisfies the same inequality as above. 
\begin{align*}
\frac{ \# E(A \cup C, B)}{|E|} &\geq \frac{ \# E(A \cup C, B)}{2 \# E(A \cup C, B) - \frac{|A| + |B|}{2}}.
 \end{align*}

\textbf{Conclusion.} At this point, we may use the inequality
\begin{align*}
\frac{ \# E(A \cup C, B)}{|E|} &\geq \frac{ \# E(A \cup C, B)}{2 \# E(A \cup C, B) - \frac{|A| + |B|}{2}}.
 \end{align*}
 unconditionally. The form of the inequality already suggests that we will be able to achieve a ratio
 larger than $1/2$.
Using the hand-shake lemma and the fact that $\# E(A,C) = \# E(B,C)$ together with $d(v) \leq \Delta$, we get
\begin{align*}
 \# E(A,B) &= \left(\frac{1}{2} \sum_{v \in A \cup B} d(v)\right) - \# E(A,A) - \# E(B,B)  - \# E(A,C)\\
 &\leq \Delta \frac{|A| + |B|}{2} -   \# E(A,A) - \# E(B,B) - \# E(A,C).
 \end{align*}
Using $\# E(A,C) = \# E(B,C)$  in the from $\# E(A \cup B, C) = 2\# E(A,C) $, we get
 \begin{align*}
  \Delta \frac{|A| + |B|}{2} &\geq    \# E(A,A) + \# E(B,B) + \# E(A,C) +   \# E(A,B)\\
  &=  \# E(A,A) + \# E(B,B) + \# E(A \cup B,C) +   \# E(A,B) - \# E(A,C)\\
  &= |E| - \# E(A,C).
  \end{align*}
and thus
\begin{align*}
\frac{|A| + |B|}{2}  \geq  \frac{1}{\Delta} (|E| - \# E(A,C)).
\end{align*}
 Property (5) implies $\# E(A \cup B, C) \leq 2 \#E(A, B)$ and thus
 $$ |E| \geq \# E(A \cup B, C) + \# E(A,B) \geq  \frac{3}{2}\# E(A \cup B, C)  $$
and therefore, using again $\# E(A,C) = \# E(B,C)$,
$$ \# E(A,C) = \frac{1}{2} \# E(A \cup B, C) \leq \frac{1}{3} |E|$$
from which we deduce
$$ \frac{|A| + |B|}{2}  \geq  \frac{2}{3\Delta} |E|.$$
Let us now suppose that, for some $0 < c < 1$,
$$ c = \frac{ \# E(A \cup C, B)}{|E|} \geq \frac{ \# E(A \cup C, B)}{2 \# E(A \cup C, B) - \frac{|A| + |B|}{2}}.$$
Then
\begin{align*}
 c &\geq  \frac{ \# E(A \cup C, B)}{2 \# E(A \cup C, B) - \frac{2}{3 \Delta} |E|} \\
 &=   \frac{  c \cdot |E|}{2 c |E| - \frac{2}{3 \Delta} |E|}  = \frac{1}{2 - \frac{2}{3\Delta} \frac{1}{c}}
\end{align*}
which forces
$$ c \geq \frac{1}{2} + \frac{1}{3\Delta}.$$
This concludes the argument.

\end{document}